\numberwithin{equation}{section}
\def\<{\langle}
\def\>{\rangle}
\def\DD{{\mathcal D}}
\def\EE{{\mathcal E}}
\def\HH{{\mathcal H}}
\def\KK{{\mathcal K}}
\def\LL{{\mathcal L}}
\def\bbN{\mathbb{N}}
\def\bbD{\mathbb{D}}
\def\bbT{\mathbb{T}}
\def\FFF{\mathfrak{F}}
\newtheorem{lemma}{Lemma}[section]
\newtheorem{theorem}[lemma]{Theorem}
\newtheorem{corollary}[lemma]{Corollary}
\newtheorem{conjecture}[lemma]{Conjecture}
\theoremstyle{definition}
\newtheorem{remark}[lemma]{Remark}
\title{Reducing subspaces of $ C_{00} $ contractions}
\author[Benhida]{Chafiq Benhida} 
\address{Laboratoire Paul Painlev\'e, Universit\'e Lille 1, 59 655 Villeneuve d'Ascq C\'edex }
 \email{chafiq.benhida@univ-lille.fr}
\author[Fricain]{Emmanuel Fricain}
\address{Laboratoire Paul Painlev\'e, Universit\'e Lille 1, 59 655 Villeneuve d'Ascq C\'edex }
\email{emmanuel.fricain@univ-lille.fr}
\author[Timotin]{Dan Timotin}
 \address{Simion Stoilov Institute of Mathematics of the Romanian Academy, PO Box 1--764, Bucharest 014700, Romania}
 \email{Dan.Timotin@imar.ro}
\keywords{ Truncated Toeplitz operators, reducibility, model space}
\subjclass[2010]{ 47B35,47A15,47B38}
\thanks{This work was partially supported by Labex CEMPI (ANR-11-LABX-0007).}
\begin{document}

\begin{abstract}
Using the Sz.-Nagy--Foias theory of contractions, we obtain general results about reducibility for a class of completely nonunitary contractions.  These are applied to certain truncated Toeplitz operators, previously considered by Li--Yang--Lu and Gu. In particular, a negative answer is given to a conjecture stated by the latter.
\end{abstract}

\maketitle

\section{Introduction}

We will denote by $ L^2 $   the Lebesgue space $ L^2(\bbT, dm) $, where $ dm $ is normalized Lebesgue measure. The subspace of functions whose negative Fourier coefficients are zero is denoted by $ H^2 $; it is identified with the space of analytic functions in the unit disc with square summable Taylor coefficients. An inner function is an element of $ H^2 $ whose values have modulus 1 almost everywhere on $ \bbT $.

If $ \theta $ is an inner function, then the space $ K_\theta=H^2\ominus \theta H^2 $ is usually called a model space; it has been the focus of much research, in function theory in the unit disc as well as in operator theory (see, for instance,\cite{S1, Nik}; or~\cite{GMR} for a more recent account). In particular, in the last two decades several papers discuss the so-called \emph{truncated Toeplitz operators}, introduced in~\cite{S2}, which are compressions to $ K_\theta $ of multiplication operators on $ L^2 $.

Originating with work in~\cite{DF}, the question of reducibility of a certain class of truncated Toeplitz operators has been recently investigated in papers by Yi, Yang, and Lu~\cite{LYL0, LYL} and Gu~\cite{Gu}. Besides certain remarkable results, they also contain intriguing questions that have not yet found their solution. 

The current paper has several purposes. First, we put the problem of reducibility of the truncated Toeplitz operators in a larger context, that of the Sz.-Nagy--Foias theory of completely nonunitary contractions~\cite{NF}, and show that some results in the above quoted papers may be generalized or given more transparent proofs. Secondly, we answer in the negative a conjecture stated in~\cite{Gu} and prove a statement that replaces it.

The plan of the paper is the following. After presenting in the next section the elements of Sz.-Nagy theory that interest us, we obtain in Section~\ref{se:reducibility general} some general results about reducibility for completely nonunitary contractions. These results are applied in Section~\ref{se:a class} to a certain class of truncated Toeplitz operators. The connection to~\cite{LYL} is achieved in Section~\ref{se:part case}, while the relation to~\cite{Gu} is the content of the last section.

	\section{Sz.-Nagy--Foias dilation theory}\label{se:nagy-foias}
The general reference for   this section is the monograph~\cite{NF}, in particular chapters I, II, and VI.

\subsection{Minimal isometric dilation}

If $ \HH $ is a Hilbert space and $ \HH_1 $ is a closed subspace, we will denote by $ P_{\HH_1} $ the orthogonal projection onto $ \HH_1 $.

A closed subspace $M$ of $\HH$ is said to be \emph{reducing} for an operator $T$ if both $M$ and $M^\perp$ are invariant with respect to $T$.
A \emph{completely nonunitary contraction} $ T\in \LL(\HH) $ is a linear operator that satisfies $ \|T\|\le 1 $, and there is no reducing subspace of $ T $ on which it is unitary.  The defect of $T$ is the operator $ D_T=(I-T^*T)^{1/2} $, and the defect space is $\DD_T=\overline{D_T\HH} $. 

We write   $T\in C_{\cdot 0}$ if $T^*{}^n$ tends strongly to $0$, and  $ T\in C_{00} $ if $T$ and $T^*$ are in $C_{\cdot 0}$, that is $ T^n $ and $ T^*{}^n $ both tend strongly to $0$. If $T\in C_{00}$, then it can be shown that   $ \dim\DD_{T}=\dim\DD_{T^*} $. The subclass of $ C_{00} $ for which this dimension is finite and equal to~$ N $ is denoted by $ C_0(N) $.  We will mostly be interested by contractions in the class $ C_{00} $.

An isometric dilation of $T$ is an isometric operator $ V\in \LL(\KK) $, with $ \KK\supset \HH $, such that $ P_\HH V^n|\HH = T^n $ for any $ n\in\bbN $. Note that if $T=P_\HH V|\HH$ and $VH^\perp\subset H^\perp$, then $V$ is a dilation. An isometric dilation $V\in \LL(\KK) $ is called minimal if $ \KK=\bigvee_{n=0}^\infty V^n\HH $.
 This is uniquely defined, modulo a unitary isomorphism commuting with the dilations;
 in~\cite{NF} there is a precise description of its geometric structure. This becomes simpler  for contractions in $ C_{\cdot0} $; since this is the only case we are interested in, we will describe the minimal isometric dilation in this case.

We will say that a subspace $ X\subset \KK $ is \emph{wandering} for $ V $ if $ V^nX\perp V^mX $ for any $ n\not=m $, and in this case we will denote $ M_+(X):=\bigoplus_{n=0}^\infty V^nX $. Note that $ M_+(X) $ is invariant with respect to $ V $. 

\begin{lemma}\label{le:structure of minimal dilation} If $ T $ is a completely nonunitary contraction and $ V $ is its minimal isometric dilation, then 
 $ T\in C_{\cdot0} $ if and only if there exist wandering subspaces $ L, L_* \subset \KK $ for $ V $, with $ \dim L=\dim \DD_T$ and $\dim L_*=\dim \DD_{T^*}$, such that 
\begin{equation}\label{eq:structure of minimal dilation}
	\KK= M_+(L_*)=\HH\oplus M_+(L).
\end{equation}

In this case,
 the operators
\begin{equation}\label{eq:identification D_T L}
	\phi:D_T x\mapsto (V-T)x, \quad \phi_*:D_{T^*} x\mapsto x-VT^* x
\end{equation}
extend to unitary operators $ \DD_T\to L $ and $ \DD_{T^*} \to L_*$. 
	
\end{lemma}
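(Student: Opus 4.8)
The plan is to work with the explicit candidates
$$L:=\overline{(V-T)\HH},\qquad L_*:=\overline{(I-VT^*)\HH},$$
and to exploit one structural property of the minimal isometric dilation recalled from \cite{NF}: $\HH^\perp$ is invariant for $V$, equivalently $\HH$ is invariant for $V^*$ with $V^*|_\HH=T^*$ (hence $V^{*n}|_\HH=T^{*n}$ for every $n$). I would begin with the two assertions that hold for any completely nonunitary contraction. Expanding $\|(V-T)x\|^2$ and using $\langle Vx,Tx\rangle=\langle x,V^*Tx\rangle=\langle x,T^*Tx\rangle$ (legitimate since $Tx\in\HH$) gives $\|(V-T)x\|=\|D_Tx\|$; symmetrically $\|(I-VT^*)x\|=\|D_{T^*}x\|$. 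Since $D_T\HH$ and $D_{T^*}\HH$ are dense in $\DD_T$ and $\DD_{T^*}$, the maps in \eqref{eq:identification D_T L} extend to isometries onto $L$ and $L_*$, hence to unitaries; in particular $\dim L=\dim\DD_T$ and $\dim L_*=\dim\DD_{T^*}$.

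Next I would prove $\KK=\HH\oplus M_+(L)$, again without any hypothesis on the class. Orthogonality of $\HH$ and $M_+(L)$ reduces, via $V^{*k}g=T^{*k}g\in\HH$, to $\HH\perp L$, which follows at once from $P_\HH V|_\HH=T$. The wandering property of $L$ comes from the identity $V^{*k}(V-T)x=T^{*(k-1)}D_T^2x\in\HH$ for $k\ge1$, a vector orthogonal to $L$. Finally, $\HH+M_+(L)$ is dense because $V^n\HH\subseteq\HH+L+\dots+V^{n-1}L$ (an immediate induction from $V\HH\subseteq\HH+L$), together with minimality $\KK=\bigvee_nV^n\HH$.

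The heart of the proof is the forward implication $T\in C_{\cdot0}\Rightarrow\KK=M_+(L_*)$. First, $V$ is a pure isometry: on the (dense) linear span of $\bigcup_mV^m\HH$ one has $V^{*n}(V^mh)=T^{*(n-m)}h\to0$ as $n\to\infty$, so $V^{*n}\to0$ strongly by uniform boundedness, and then $\bigcap_nV^n\KK=\{0\}$ because every $\xi$ in that intersection satisfies $\|V^{*n}\xi\|=\|\xi\|$. Thus $\KK=M_+(N)$ with $N=\KK\ominus V\KK=\ker V^*$. It remains to see $N=L_*$. The inclusion $L_*\subseteq N$ is the identity $V^{*k}(I-VT^*)x=T^{*k}x-T^{*k}x=0$ for $k\ge1$ (which also re-proves that $L_*$ is wandering); conversely, if $\xi\in N$ is orthogonal to $L_*$ then $\langle\xi,x\rangle=\langle\xi,VT^*x\rangle=0$ for every $x\in\HH$ (the last equality since $\xi\perp V\KK$), so $\xi\perp V^k\HH$ for all $k\ge0$ and hence $\xi=0$ by minimality. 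Therefore $N=L_*$ and $\KK=M_+(L_*)$. For the converse, if $\KK=M_+(L_*)$ then $V$ is a unilateral shift, so $V^{*n}\to0$ strongly, and $\|T^{*n}x\|=\|P_\HH V^{*n}x\|\le\|V^{*n}x\|\to0$ shows $T\in C_{\cdot0}$.

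I expect the main obstacle to be precisely the identification $\ker V^*=L_*$: it is there that all the ingredients must be combined --- the Wold decomposition of the a priori merely isometric $V$, the minimality $\KK=\bigvee_nV^n\HH$, and the relation $V^*|_\HH=T^*$ --- and it is there that the hypothesis $T\in C_{\cdot0}$ is genuinely used, through the vanishing of the unitary part of $V$. The rest is routine bookkeeping with the two defect identities and with the invariance of $\HH^\perp$.
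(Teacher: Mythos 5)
Your proof is correct, and it is essentially the argument of the source the paper itself relies on: the lemma is stated without proof and attributed to \cite{NF}, and your route --- the identities $\|(V-T)x\|=\|D_Tx\|$ and $\|(I-VT^*)x\|=\|D_{T^*}x\|$ giving the unitaries $\phi,\phi_*$, the decomposition $\KK=\HH\oplus M_+(L)$ valid for every contraction, and the identification $\ker V^*=L_*$ via the Wold decomposition of $V$, which is where $C_{\cdot0}$ is genuinely used --- is exactly the standard one from \cite[Ch.~II]{NF}. The only ingredient you recall rather than derive, namely $V^*|_{\HH}=T^*$, follows in one line from minimality ($\langle V^*h,V^nk\rangle=\langle h,T^{n+1}k\rangle=\langle T^*h,V^nk\rangle$ for $h,k\in\HH$), so nothing is missing.
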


%

\subsection{Analytic vector valued functions}

If $ \EE $ is a Hilbert space, then $ H^2(\EE) $ is the Hilbert space of $ \EE $-valued analytic functions in $ \bbD $ with the norms of the Taylor coefficients square summable. As in the scalar case, these functions have strong radial limits almost everywhere on $ \bbT $, and so may be identified with their boundary values, defined on $ \bbT $.

Denote by $ \bm{T}^\EE_z $ multiplication by $ z $ acting on  $H^2(\EE)$; it is an isometric operator. 
If $ \omega:\EE\to \EE' $ is unitary, then the notation $ \tilde{\omega} $ will indicate the unique unitary extension $ \tilde{\omega}:H^2(\EE)\to H^2(\EE') $ such that
$\tilde{\omega} \bm{T}^\EE_z= \bm{T}^{\EE'}_z \tilde{\omega}  $.  

Suppose $ X\subset \KK $ is wandering for the isometry $ V\in\LL(\KK) $. 
Then
the map $  \FFF_X $, defined by 
\begin{equation}\label{eq:Fourier representation}
	\FFF_X(\sum_{n=0}^\infty V^n x_n)=\sum_{n=0}^\infty \lambda^n x_n,
\end{equation} 
is unitary from $ M_+(X) $ to $ H^2(X) $.

Another class of functions that we have to consider take as values operators between two Hilbert spaces $ \EE, \EE_* $. More precisely, we will be interested in  \emph{contractive analytic functions}; that is, functions   $ \Theta:\bbD\to  \LL(\EE, \EE^*) $, which satisfy $ \|\Theta(z) \|\le 1$ for all $ z\in\bbD $. As in the scalar case, $ \Theta $ has boundary values $ \Theta(e^{it}) $ almost everywhere on~$ \bbT $.

A contractive analytic function is called \emph{pure} if $\| \Theta(0)x\|<\| 
x\| $ for any $x\in \EE$, $x\neq 0$.
Any contractive analytic function admits a decomposition in a direct sum $\Theta=\Theta_p\oplus \Theta_u   $, where $\Theta_p$ is pure and $\Theta_u$ is a constant unitary operator; then $\Theta_p$ is called the \emph{pure part} of $\Theta$.
A contractive analytic function will be called \emph{bi-inner} if $ \Theta(e^{it}) $ is almost everywhere unitary. (We prefer this shorter word rather than call them inner and *-inner).

The appropriate equivalence relation for contractive analytic functions is that of coincidence: two analytic functions $ \Theta:\bbD\to \LL(\EE, \EE_*) $,  $ \Theta':\bbD\to \LL(\EE', \EE'_*) $ are said to \emph{coincide} if there exist unitary operators $ \omega:\EE\to \EE' $, $ \omega_*:\EE_*'\to \EE_*' $, such that $ \Theta'(\lambda)\omega=\omega_*\Theta(\lambda) $ for all $ \lambda\in\bbD $. 

\subsection{Functional model and characteristic function}

The model theory of Sz.-Nagy and Foias associates to any completely nonunitary contraction $ T $ a pure contractive analytic function $ \Theta_T(z) $, with values in $ \LL(\DD_T, \DD_{T^*}) $, defined by the formula
\begin{equation}\label{eq:charactersitic function}
	\Theta_T(z)=-T+zD_{T^*}(I-zT^*)^{-1}D_T|\DD_T.
\end{equation}
A functional model space and an associated model operator are constructed by means of $ \Theta_T $, and one can prove that $ T $ is unitarily equivalent to this model operator. 

As we will be interested only in $ C_{00} $ contractions, we will describe the model only in this case, in which it takes a significantly simpler form.  The reason is that $T\in C_{00} $ is equivalent to $ \Theta_T $  bi-inner.
The functional model associated to a bi-inner contractive analytic function $ \Theta:\bbD\to  \LL(\EE, \EE^*) $ is defined as follows:	
the \emph{model space} is
\begin{equation}\label{eq:model space}
	\HH_\Theta= H^2(\EE_*) \ominus \Theta   H^2(\EE),
\end{equation}
while the \emph{model operator} $ \bm{S}_\Theta $ is the compression to $ \HH_\Theta $ of $ \bm{T}_z^{\EE_*} $. If $ \Theta $ is pure, then $ \bm{T}_z^{\EE_*}  $ is precisely a minimal unitary dilation of $ \bm{S}_\Theta $.

	Note that \eqref{eq:model space} shows that $ \bm{S}_\Theta $ satisfies 
	the assumptions of Lemma~\ref{le:structure of minimal dilation} with $ L=\Theta \EE $, $ L_*=\EE_* $. In particular, 
	\begin{equation}\label{eq:defects for model}
		\dim \DD_{\bm{S}_\Theta}=\dim \EE,\qquad
			\dim \DD_{\bm{S}^*_\Theta}=\dim \EE_*.	
	\end{equation}

Suppose $ \Theta:\bbD\to \LL(\EE, \EE_*) $ and  $ \Theta':\bbD\to \LL(\EE', \EE'_*) $ coincide, by means of the  operators $ \omega:\EE\to \EE' $, $ \omega_*:\EE_*'\to \EE_*' $. Then the unitary $\tilde{\omega}_*: H^2(\EE_*) \to H^2(\EE'_*) $ satisfies $ \tilde{\omega}(\HH_\Theta)=\HH_{\Theta'} $ and 
\[
\tilde{\omega}_*\bm{S}_\Theta=\bm{S}_{\Theta'}\tilde{\omega}_*.
\]

Returning now to the contraction $ T $ and its characteristic function, the next lemma is a particular case of one of the basic results in~\cite[Chapter VI]{NF}.

\begin{lemma}\label{le:model for C.0}
If   $ T\in C_{00}$, then the formula
 \eqref{eq:charactersitic function}
defines a bi-inner pure   analytic 
function with values in $ \LL(\DD_T, \DD_{T^*}) $, and $ T $
is unitarily equivalent to $ \bm{S}_{\Theta_T} $. $ \bm{S}_{\Theta_T} $ is called the \emph{functional model} of $ T $.	
\end{lemma}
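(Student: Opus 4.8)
The plan is to realize $T$ concretely inside its minimal isometric dilation and to extract both the bi-innerness of $\Theta_T$ and the unitary equivalence from the geometry furnished by Lemma~\ref{le:structure of minimal dilation}. Purity and contractivity of $\Theta_T$ are the easy part: contractivity is classical, following from the identity $I-\Theta_T(z)^*\Theta_T(z)=(1-|z|^2)D_T(I-\bar zT)^{-1}(I-zT^*)^{-1}D_T|\DD_T\ge0$ (or one may quote~\cite[Ch.~VI]{NF}), while purity is immediate from~\eqref{eq:charactersitic function}: since $\Theta_T(0)=-T|\DD_T$ and $\ker D_T=\DD_T^\perp$, any $x\in\DD_T\setminus\{0\}$ has $D_Tx\ne0$, so $\|\Theta_T(0)x\|^2=\|Tx\|^2=\|x\|^2-\|D_Tx\|^2<\|x\|^2$.

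Let $V\in\LL(\KK)$ be the minimal isometric dilation of $T$. As $C_{00}\subset C_{\cdot0}$, Lemma~\ref{le:structure of minimal dilation} supplies wandering subspaces $L,L_*$ with $\KK=M_+(L_*)=\HH\oplus M_+(L)$ together with the unitaries $\phi:\DD_T\to L$ and $\phi_*:\DD_{T^*}\to L_*$ of~\eqref{eq:identification D_T L}. Set $U:=\tilde\phi_*^{-1}\circ\FFF_{L_*}:\KK\to H^2(\DD_{T^*})$; it is unitary, and because $\FFF_{L_*}$ carries $V$ to $\bm{T}_z^{L_*}$ (using $\KK=M_+(L_*)$ in~\eqref{eq:Fourier representation}) and $\tilde\phi_*$ carries $\bm{T}_z^{\DD_{T^*}}$ to $\bm{T}_z^{L_*}$, we get $UV=\bm{T}_z^{\DD_{T^*}}U$. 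The heart of the matter is the identity
\[
U\phi(y)=\Theta_T(\cdot)\,y\qquad\text{for every }y\in\DD_T,
\]
i.e.\ the Fourier series of $\phi(y)$ along $L_*$, transported to $H^2(\DD_{T^*})$ by $\tilde\phi_*^{-1}$, is precisely the function $z\mapsto\Theta_T(z)y$.

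It is enough to check this for $y=D_Tx$, $x\in\HH$ (these are dense in $\DD_T$, and both sides are bounded linear in $y$). By~\eqref{eq:identification D_T L}, $\phi(D_Tx)=(V-T)x$, and, writing $P_{L_*}=I-VV^*$, its $k$-th Fourier coefficient along $V^kL_*$ is $l_k=P_{L_*}V^{*k}(V-T)x$. Since $\HH=\KK\ominus M_+(L)$ and $M_+(L)$ is $V$-invariant, $\HH$ is $V^*$-invariant, so $V^*|\HH=T^*$; combining this with $D_T^2=I-T^*T$ and the commutation $TD_T=D_{T^*}T$, a short manipulation yields $\phi_*^{-1}(l_0)=-TD_Tx$ and $\phi_*^{-1}(l_k)=D_{T^*}T^{*(k-1)}D_T^2x$ for $k\ge1$. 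On the other hand, expanding $(I-zT^*)^{-1}=\sum_{j\ge0}z^jT^{*j}$ in~\eqref{eq:charactersitic function} shows these are exactly the Taylor coefficients of $z\mapsto\Theta_T(z)D_Tx$, which proves the identity.

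The rest is formal. Since $U$ is unitary and the vectors $V^k\phi(y)$ ($k\ge0$, $y\in\DD_T$) span $M_+(L)=\bigoplus_kV^kL$ orthogonally with $\|V^k\phi(y)\|=\|y\|$, the identity together with $UV=\bm{T}_z^{\DD_{T^*}}U$ gives $\|\Theta_Tp\|_{H^2(\DD_{T^*})}=\|p\|_{H^2(\DD_T)}$ for every $\DD_T$-valued polynomial $p$, hence for every $p\in H^2(\DD_T)$ by density and contractivity; a standard Fourier argument then forces $\Theta_T(e^{it})$ to be isometric a.e., i.e.\ $\Theta_T$ is inner. Applying this to the completely nonunitary contraction $T^*\in C_{\cdot0}$ and noting that~\eqref{eq:charactersitic function} gives $\Theta_{T^*}(z)=\Theta_T(\bar z)^*$, we see that $\Theta_T$ is also $*$-inner, hence bi-inner, so $\bm{S}_{\Theta_T}$ is defined. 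Then $\Theta_TH^2(\DD_T)$ is closed, and, since $M_+(L)=\bigoplus_kV^kL$ with $L=\phi(\DD_T)$, the identity shows that $U$ maps $M_+(L)$ onto $\Theta_TH^2(\DD_T)$; therefore $U(\HH)=H^2(\DD_{T^*})\ominus\Theta_TH^2(\DD_T)=\HH_{\Theta_T}$, and since $T=P_\HH V|\HH$ we conclude $UTU^*=P_{\HH_{\Theta_T}}\bm{T}_z^{\DD_{T^*}}|\HH_{\Theta_T}=\bm{S}_{\Theta_T}$. The only genuinely delicate step is the Fourier identity $U\phi=\Theta_T(\cdot)|\DD_T$; the single non-obvious ingredient it uses is the intertwining $TD_T=D_{T^*}T$, which is exactly what makes the constant term equal $-TD_Tx$ and thereby match $\Theta_T(0)$.
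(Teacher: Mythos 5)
Your proof is correct, and it is essentially the classical Sz.-Nagy--Foias argument that the paper itself does not reproduce but simply cites from~\cite[Chapter VI]{NF}: realize $T$ inside its minimal isometric dilation, identify the Fourier coefficients of $L$ relative to $L_*$ with the Taylor coefficients of $\Theta_T$, and read off both bi-innerness and the unitary equivalence. Note that your ``key identity'' $U\phi(y)=\Theta_T(\cdot)y$ is precisely the content of the paper's Lemma~\ref{le:relation minimal characteristic} (parts (i) and (iii)), which the paper also states without proof; your computation of $\phi_*^{-1}(l_k)$ checks out, so nothing is missing.
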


There is a  relation between the functional model and the geometrical structure of a minimal unitary dilation given by~\eqref{eq:structure of minimal dilation}, as shown by the next result. 

\begin{lemma}\label{le:relation minimal characteristic}
	Suppose $ T\in C_{00} $, $ V\in\LL(\KK) $ is a minimal isometric dilation of $ T $, and $ L,L_* $ are wandering subspaces for $ V $ satisfying~\eqref{eq:structure of minimal dilation}. 	Extend $ \phi, \phi_* $   in~\eqref{eq:identification D_T L}  to  unitary operators $ \tilde\phi :H^2(\DD_{T})\to H^2(L) $,  $ \tilde\phi_*:H^2(\DD_{T^*})\to H^2(L_*) $, and  define  $ \Omega=\FFF_{L_*}^*\Phi_*$.  
	
	\begin{itemize}
		\item[\rm (i)] The map $	\FFF_{L_*}^*\Phi_*\Theta_T \Phi^* \FFF_L  $ is the inclusion of $ M_+(L) $ into $ M_+(L_*) $.
	\item[\rm (ii)]	We have 
		\begin{equation}\label{eq:fourier minimal dilation}
			\begin{split}
				\Omega \HH_{\Theta_T}&= \HH, \quad \Omega (\DD_{T^*})=L_*,
				\quad \Omega\Theta_T (\DD_{T})=L,\\ \Omega \bm{T}_z^{\DD_{T^*}}&=V\Omega, \quad\Omega \bm{S}_{\Theta_T}= T\Omega.
			\end{split}
		\end{equation}
	
	\item[\rm (iii)] If $ \Theta=\phi_* \Theta_T \phi^* $ is  written
	$ \Theta(\lambda)=\sum_{n=0}^\infty \lambda^n \Theta_n $ (with $ \Theta_n:L\to L_* $), then
	\begin{equation}\label{eq:formula for Theta_n}
		\Theta_n=P_{L_*}(V^*)^nJ,
	\end{equation}
	\end{itemize}
	where $ J $ denotes the embedding of $ L $ into $ M_+(L_*) $.
		
\end{lemma}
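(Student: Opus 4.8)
The plan is to reduce all three assertions to a single identity, namely that the characteristic function $\Theta_T$ encodes exactly how the invariant subspace $M_+(L)$ is embedded into $\KK=M_+(L_*)$. Write $\iota\colon M_+(L)\hookrightarrow M_+(L_*)$ for the inclusion. The Fourier representations $\FFF_{L_*}$, $\FFF_L$ of \eqref{eq:Fourier representation} are unitaries carrying $V$ to the shifts $\bm{T}^{L_*}_z$, $\bm{T}^{L}_z$, hence $\Xi:=\FFF_{L_*}\,\iota\,\FFF_L^{-1}\colon H^2(L)\to H^2(L_*)$ is an isometry intertwining $\bm{T}^{L}_z$ with $\bm{T}^{L_*}_z$, so it is multiplication by an inner analytic function $\Xi\colon\bbD\to\LL(L,L_*)$, $\Xi(\lambda)=\sum_{n\ge0}\lambda^n\Xi_n$. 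I would first determine its coefficients: for constant $u\in L$ one has $\FFF_L^{-1}u=u$ and $\iota u=u\in\KK$, while $P_{L_*}(V^*)^n$ recovers the $n$-th coefficient of any vector of $\KK=M_+(L_*)$ under $\FFF_{L_*}$; this gives $\Xi_n=P_{L_*}(V^*)^nJ$, with $J$ the embedding of $L$ into $M_+(L_*)$ --- already the right-hand side of item~(iii).

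The core step is the identity $\Xi=\tilde\phi_*\,\Theta_T\,\tilde\phi^*$ as operators $H^2(L)\to H^2(L_*)$; since $\tilde\phi_*\Theta_T\tilde\phi^*$ is multiplication by $\lambda\mapsto\phi_*\Theta_T(\lambda)\phi^*$, this amounts to $\phi_*\,(\Theta_T)_n\,\phi^*=\Xi_n$ for every $n$, where $\Theta_T(\lambda)=\sum_n\lambda^n(\Theta_T)_n$. Expanding \eqref{eq:charactersitic function} gives $(\Theta_T)_0=-T|\DD_T$ and $(\Theta_T)_n=D_{T^*}(T^*)^{n-1}D_T|\DD_T$ for $n\ge1$, and by continuity it suffices to test these on $v=D_Th$ with $h\in\HH$, for which $\phi v=(V-T)h$. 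Here I would invoke two standard properties of the minimal dilation: $V^*|\HH=T^*$ (because $\HH$ is $V^*$-invariant and $P_\HH V|\HH=T$) and $P_{L_*}=I-VV^*$ (because $L_*=\KK\ominus V\KK=\ker V^*$); together these yield the convenient identity $P_{L_*}w=w-VT^*w=\phi_*(D_{T^*}w)$ for all $w\in\HH$, the last equality being just \eqref{eq:identification D_T L}. A short computation then shows that, for $n\ge1$, $(V^*)^n(V-T)h=(T^*)^{n-1}D_Tv\in\HH$, whence
\[
\Xi_n\phi v=P_{L_*}(V^*)^n(V-T)h=\phi_*\bigl(D_{T^*}(T^*)^{n-1}D_Tv\bigr)=\phi_*\bigl((\Theta_T)_nv\bigr);
\]
and for $n=0$, since $P_{L_*}Vh=0$ and $Tv=D_{T^*}(Th)$ (by $TD_T=D_{T^*}T$), one gets $\Xi_0\phi v=-P_{L_*}Th=-\phi_*(D_{T^*}Th)=\phi_*(-Tv)=\phi_*((\Theta_T)_0v)$. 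This proves the core identity, hence item~(iii) (with $\Theta=\phi_*\Theta_T\phi^*$), and also item~(i), since then $\FFF_{L_*}^*\tilde\phi_*\Theta_T\tilde\phi^*\FFF_L=\FFF_{L_*}^*\Xi\FFF_L=\iota$.

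Item~(ii) follows formally. The operator $\Omega=\FFF_{L_*}^*\tilde\phi_*$ is unitary and satisfies $\Omega\bm{T}^{\DD_{T^*}}_z=V\Omega$ at once, as $\FFF_{L_*}$ and $\tilde\phi_*$ intertwine the corresponding shifts; restricting $\Omega$ to constants gives $\Omega\DD_{T^*}=\phi_*\DD_{T^*}=L_*$. From item~(i), $\Omega\Theta_T=\iota\,\FFF_L^{-1}\tilde\phi$ on $H^2(\DD_T)$, so $\Omega\Theta_TH^2(\DD_T)=\iota M_+(L)=M_+(L)$, and restricting to constants once more, $\Omega\Theta_T\DD_T=\phi\DD_T=L$; since $\Omega$ is unitary with $\Omega H^2(\DD_{T^*})=\KK$, we conclude $\Omega\HH_{\Theta_T}=\KK\ominus M_+(L)=\HH$ by \eqref{eq:structure of minimal dilation}. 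Finally, $\Omega P_{\HH_{\Theta_T}}=P_\HH\Omega$ (since $\Omega$ is unitary with $\Omega\HH_{\Theta_T}=\HH$), hence for $x\in\HH_{\Theta_T}$,
\[
\Omega\bm{S}_{\Theta_T}x=\Omega P_{\HH_{\Theta_T}}\bm{T}^{\DD_{T^*}}_z x=P_\HH\Omega\bm{T}^{\DD_{T^*}}_z x=P_\HH V\Omega x=T\Omega x.
\]

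I expect the main obstacle to be the core identity $\phi_*(\Theta_T)_n\phi^*=P_{L_*}(V^*)^nJ$: one must work with $\phi_*$ only through its defining formula on vectors $D_{T^*}w$, keep careful track of which intermediate vectors lie in $\HH$ (so that $V^*$ may be replaced there by $T^*$), and recognize that for $w\in\HH$ the projection $P_{L_*}w=w-VT^*w$ is literally $\phi_*(D_{T^*}w)$; once this is in place, both sides collapse onto the Taylor expansion of \eqref{eq:charactersitic function} term by term. Everything else is bookkeeping with the Fourier representations \eqref{eq:Fourier representation} and the decomposition \eqref{eq:structure of minimal dilation}.
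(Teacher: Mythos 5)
Your proof is correct: the core identity $\phi_*(\Theta_T)_n\phi^* = P_{L_*}(V^*)^nJ$ is established soundly (the facts $V^*|\HH=T^*$, $L_*=\ker V^*$ so $P_{L_*}=I-VV^*$, and $D_{T^*}T=TD_T$ are all legitimate, the computation $(V^*)^n(V-T)h=(T^*)^{n-1}D_T^2h\in\HH$ checks out, and items (i) and (ii) then follow formally), with the only cosmetic point being that you correctly read the paper's $\Phi_*$ as $\tilde\phi_*$. The paper gives no proof of this lemma --- it is quoted from~\cite[Chapter VI]{NF} --- and your argument is essentially the classical Sz.-Nagy--Foias derivation, reconstructed correctly.
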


\section{Reducibility}\label{se:reducibility general}

In the sequel of the paper we will   be interested by reducibility of certain contractions. 
Fortunately, this can be easily characterized through characteristic functions.

\begin{lemma}\label{le:general reducibility}
	Suppose $ T\in C_{00} $ has characteristic function $ \Theta_T:\DD_T\to \DD_{T^*} $. Then the following are equivalent.
	
	\begin{itemize}
		\item[\rm(i)] $ T=T_1\oplus T_2 $.
		
		\item[\rm(ii)] There exist nontrivial orthogonal decompositions $ \DD_T=E^1\oplus E^2 $, $ \DD_{T^*}=E^1_*\oplus E^2_* $  which diagonalize $ \Theta_T(\lambda) $ for all $ \lambda\in\bbD $; that is,
		\begin{equation}\label{eq:decomposition of Theta}
			\Theta_T(\lambda)=
			\begin{pmatrix}
				\Theta_1(\lambda)&0\\0& \Theta_2(\lambda)
			\end{pmatrix}.
		\end{equation}
		
	\end{itemize}
	In this case $ \dim\DD_{T_i}=\dim E^i = \dim \DD_{T_i^*}=\dim E^i_* $, and $ \Theta_{T_i} $ coincides with $ \Theta_i $.

\end{lemma}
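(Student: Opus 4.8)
The plan is to prove the two implications separately, reading the supplementary assertions off the constructions used in each direction.

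\textbf{(i)$\Rightarrow$(ii).} I would exploit that the characteristic function is built ``blockwise'' from an orthogonal decomposition. If $T=T_1\oplus T_2$ on $\HH=\HH_1\oplus\HH_2$, then $I-T^*T$ is block diagonal, hence so is its positive square root, giving $D_T=D_{T_1}\oplus D_{T_2}$ and $\DD_T=\DD_{T_1}\oplus\DD_{T_2}$; likewise $\DD_{T^*}=\DD_{T_1^*}\oplus\DD_{T_2^*}$. Since $(I-zT^*)^{-1}=(I-zT_1^*)^{-1}\oplus(I-zT_2^*)^{-1}$ as well, substituting into~\eqref{eq:charactersitic function} yields $\Theta_T(z)=\Theta_{T_1}(z)\oplus\Theta_{T_2}(z)$ relative to $E^i:=\DD_{T_i}$, $E^i_*:=\DD_{T_i^*}$, which is exactly~\eqref{eq:decomposition of Theta} with $\Theta_i=\Theta_{T_i}$. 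Each $T_i$ lies in $C_{00}$ (its powers and adjoint powers are restrictions of those of $T$, hence tend strongly to $0$), so $T_i$ is completely nonunitary and $\Theta_{T_i}$ is defined; moreover if $\DD_{T_i}=0$ then $T_i$ is isometric, so $\|T_i^nx\|=\|x\|$ for all $n$, which together with $T_i^n\to 0$ strongly forces $\HH_i=0$. Hence $E^i\neq 0$, and symmetrically $E^i_*\neq 0$, so the decompositions are nontrivial. This also gives the final assertions in this direction: $\Theta_{T_i}=\Theta_i$ outright, and $\dim\DD_{T_i}=\dim\DD_{T_i^*}$ because $T_i\in C_{00}$.

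\textbf{(ii)$\Rightarrow$(i).} Here I would pass to the functional model. By Lemma~\ref{le:model for C.0} we may assume $T=\bm{S}_{\Theta_T}$ on $\HH_{\Theta_T}=H^2(\DD_{T^*})\ominus\Theta_T H^2(\DD_T)$, with $\Theta_T$ bi-inner and pure. Given the decompositions of (ii), first note each $\Theta_i$ is again bi-inner (a $2\times 2$ block-diagonal operator is unitary iff its diagonal blocks are) and pure (restrict $\|\Theta_T(0)x\|<\|x\|$ to $x\in E^i$), so $\HH_{\Theta_i}$ and $\bm{S}_{\Theta_i}$ make sense. Writing $H^2(\DD_{T^*})=H^2(E^1_*)\oplus H^2(E^2_*)$, $H^2(\DD_T)=H^2(E^1)\oplus H^2(E^2)$ and using the block form of $\Theta_T$, one gets $\Theta_T H^2(\DD_T)=\Theta_1 H^2(E^1)\oplus\Theta_2 H^2(E^2)$ as an orthogonal sum, whence
\[
\HH_{\Theta_T}=\HH_{\Theta_1}\oplus\HH_{\Theta_2},\qquad \HH_{\Theta_i}\subseteq H^2(E^i_*).
\]
Since the subspaces $H^2(E^i_*)$ are reducing for $\bm{T}_z^{\DD_{T^*}}$, for $f\in\HH_{\Theta_1}$ the vector $zf$ stays in $H^2(E^1_*)$, so its projection onto $\HH_{\Theta_T}=\HH_{\Theta_1}\oplus\HH_{\Theta_2}$ equals its projection onto $\HH_{\Theta_1}$; thus $\HH_{\Theta_1}$ is $\bm{S}_{\Theta_T}$-invariant with $\bm{S}_{\Theta_T}|\HH_{\Theta_1}=\bm{S}_{\Theta_1}$, and symmetrically for $\HH_{\Theta_2}$. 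Both summands being invariant, they are reducing, so $\bm{S}_{\Theta_T}=\bm{S}_{\Theta_1}\oplus\bm{S}_{\Theta_2}$, that is $T=T_1\oplus T_2$ with $T_i:=\bm{S}_{\Theta_i}$. If some $\HH_{\Theta_i}$ were $0$, then $\dim E^i=\dim\DD_{\bm{S}_{\Theta_i}}=0$ by~\eqref{eq:defects for model}, contradicting nontriviality of the decomposition in (ii); so the splitting of $T$ is nontrivial. Finally $\dim\DD_{T_i}=\dim E^i$ and $\dim\DD_{T_i^*}=\dim E^i_*$ again by~\eqref{eq:defects for model}, these coincide because $T_i\in C_{00}$ (restriction of $T$ to a reducing subspace), and $\Theta_{T_i}$ coincides with $\Theta_i$ by the basic fact of~\cite[Ch.~VI]{NF} that the characteristic function of $\bm{S}_\Theta$ coincides with $\Theta$.

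\textbf{Main obstacle.} There is no deep difficulty here; the only care needed is in the bookkeeping — checking that bi-innerness and purity descend to the diagonal blocks $\Theta_i$ (so that the model machinery applies to them), and that the compression defining $\bm{S}_{\Theta_T}$ genuinely splits, which hinges on the inclusion $\HH_{\Theta_i}\subseteq H^2(E^i_*)$ of each model summand into a $\bm{T}_z$-reducing subspace of $H^2(\DD_{T^*})$.
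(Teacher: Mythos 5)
Your proposal is correct and follows essentially the same route as the paper: in one direction you split the defect operators and the formula~\eqref{eq:charactersitic function} blockwise, and in the other you pass to the functional model and observe $\HH_{\Theta_T}=\HH_{\Theta_1}\oplus\HH_{\Theta_2}$ with each summand invariant under $\bm{S}_{\Theta_T}$. The only difference is that you spell out some details the paper leaves implicit (nontriviality of the decompositions, and that purity and bi-innerness descend to the diagonal blocks), which is fine.
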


\begin{proof}
	If $ T=T_1\oplus T_2 $, then $ \DD_T=\DD_{T_1}\oplus \DD_{T_2} $, $ \DD_{T^*}=\DD_{T_1^*}\oplus \DD_{T_2^*} $, and formula~\eqref{eq:charactersitic function} splits according to these decompositions into $ \Theta_T(\lambda)=\Theta_{T_1}(\lambda)\oplus \Theta_{T_2}(\lambda) $.  So~\eqref{eq:decomposition of Theta} is valid, taking $ E^i=\DD_{T_i} $, $ E^i_*=\DD_{T_i^*} $.
	
	Conversely, if $\Theta(\lambda):=\Theta_T(\lambda)=\Theta_1(\lambda)\oplus \Theta_2(\lambda) $, then, according to~\eqref{eq:model space}, $ \HH_\Theta= \HH_{\Theta_1}\oplus \HH_{\Theta_2} $, and $ \HH_{\Theta_1}, \HH_{\Theta_2} $ are invariant with respect to $ \bm S_\Theta $. Since this last operator is unitarily equivalent to $ T $, $ T $ is also reducible. Moreover, $ \bm{S}_\Theta|\HH_{\Theta_i} $ is unitarily equivalent to $ \bm{S}_{\Theta_i} $, and the equality of the dimensions follows from~\eqref{eq:defects for model}.
\end{proof}

\begin{corollary}\label{co:reducibility for C_00}
	Suppose $ T\in C_{00} $. Then
  $ T $ is reducible if and only if there exist nontrivial subspaces $ E\subset \DD_T $, $ E_*\subset \DD_{T^*} $, such that  $\Theta_T(e^{it})E= E_* $ for almost all~$t$.

\end{corollary}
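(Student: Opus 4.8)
The plan is to derive this from Lemma~\ref{le:general reducibility}, which already reduces the reducibility of $T$ to the existence of a \emph{constant} block-diagonalization of $\Theta_T$, valid simultaneously for all $\lambda\in\bbD$. So what remains is to show that such a constant block-diagonalization exists if and only if one can find nontrivial subspaces $E\subset\DD_T$ and $E_*\subset\DD_{T^*}$ with $\Theta_T(e^{it})E=E_*$ a.e. The two ingredients that power this translation are the bi-innerness of $\Theta_T$ (Lemma~\ref{le:model for C.0}), i.e. the fact that $\Theta_T(e^{it})$ is unitary for almost every $t$, and the uniqueness principle for functions in $H^2$.

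For the forward direction I would start from a reducible $T$, apply Lemma~\ref{le:general reducibility} to obtain nontrivial orthogonal decompositions $\DD_T=E\oplus E^\perp$ and $\DD_{T^*}=E_*\oplus E_*^\perp$ with $\Theta_T(\lambda)E\subseteq E_*$ and $\Theta_T(\lambda)E^\perp\subseteq E_*^\perp$ for every $\lambda\in\bbD$, and then pass to the boundary. Almost everywhere $\Theta_T(e^{it})$ is unitary and carries $E$ into $E_*$ and $E^\perp$ into $E_*^\perp$; a unitary $U$ with these two properties is automatically onto on each piece, since $U(E)\subseteq E_*$ and $U(E^\perp)\subseteq E_*^\perp$ force $U^*(E_*)\subseteq E$, whence $E_*=UU^*(E_*)\subseteq U(E)\subseteq E_*$. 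Thus $\Theta_T(e^{it})E=E_*$ a.e., and $E,E_*$ are nontrivial because the decompositions were.

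For the converse I would take the given nontrivial $E,E_*$ with $\Theta_T(e^{it})E=E_*$ a.e. and check that $\DD_T=E\oplus E^\perp$, $\DD_{T^*}=E_*\oplus E_*^\perp$ block-diagonalize $\Theta_T(\lambda)$ for \emph{all} $\lambda\in\bbD$, after which Lemma~\ref{le:general reducibility} finishes the proof. To see $\Theta_T(\lambda)E\subseteq E_*$, fix $x\in E$ and $w\in E_*^\perp$: the scalar function $\lambda\mapsto\langle\Theta_T(\lambda)x,w\rangle$ belongs to $H^2$ (because $\Theta_T(\cdot)x$ is a bounded $\DD_{T^*}$-valued analytic function) and its boundary values $\langle\Theta_T(e^{it})x,w\rangle$ vanish a.e. since $\Theta_T(e^{it})x\in E_*$; hence it vanishes identically, and as $w$ is arbitrary, $\Theta_T(\lambda)x\in E_*$. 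For $\Theta_T(\lambda)E^\perp\subseteq E_*^\perp$, note that a.e. unitarity together with $\Theta_T(e^{it})E=E_*$ gives $\Theta_T(e^{it})^*E_*=E$ a.e., so for $y\in E^\perp$ and $z\in E_*$ the $H^2$ function $\lambda\mapsto\langle\Theta_T(\lambda)y,z\rangle$ has a.e. boundary values $\langle y,\Theta_T(e^{it})^*z\rangle=0$, hence vanishes identically, giving $\Theta_T(\lambda)y\in E_*^\perp$.

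The step I expect to be the crux is this passage from the boundary to the interior. For a general contractive analytic function the boundary relation $\Theta(e^{it})E=E_*$ does \emph{not} force a constant block-diagonalization; one genuinely needs the a.e. unitarity of $\Theta_T(e^{it})$ both to treat the orthogonal complement $E^\perp$ and to upgrade a boundary inclusion to a boundary equality, and the $H^2$ uniqueness principle to propagate vanishing boundary values into $\bbD$. The remaining parts --- the bookkeeping with orthogonal complements and invoking Lemma~\ref{le:general reducibility} for the final conclusion --- are routine.
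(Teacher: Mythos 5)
Your proof is correct and follows essentially the same route as the paper: reduce to Lemma~\ref{le:general reducibility}, use the a.e.\ unitarity of $\Theta_T(e^{it})$ to pass between $E,E_*$ and their orthogonal complements, and use the $H^2$ uniqueness principle to move between boundary and interior. The paper's own proof is much terser --- it only writes out the ``if'' direction and leaves the boundary-to-interior step implicit --- so your version simply supplies details the authors omitted.
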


\begin{proof}
	If nontrivial subspaces as assumed exist, then, since $ \Theta_T(e^{it}) $ is unitary almost everywhere, we also have $\Theta_T(e^{it})E^\perp= E_*^\perp $ for almost all $ t $. The decompositions $ \DD_T=E\oplus E^\perp $, $ \DD_{T^*} =E_*\oplus E_*^\perp$ satisfy then~\eqref{eq:decomposition of Theta}.  
	\end{proof}

The following is a geometrical reformulation of Corollary~\ref{co:reducibility for C_00} in terms of the spaces $ L, L_* $ appearing in an arbitrary minimal isometric dilation of~$ T $.

\begin{corollary}\label{co:geometrical reformulation}
	Suppose $ T\in C_{00} $ and $ V\in\LL(\KK) $ is a minimal dilation of $ T $, such that~\eqref{eq:structure of minimal dilation} is valid for $ L,L_* $ wandering subspaces for~$ V $. 
	Let $ d $ be a finite positive integer or $ \infty $.
	Then:
	\begin{itemize}
		\item[\rm (i)] 	If $ T $ has a nontrivial reducing subspace such that the restriction has $d$-dimensional defects, then there exist nontrivial subspaces $ L^1\subset L $, $ L^1_*\subset L_* $, both of dimension $ d $, such that 
		\begin{equation}\label{eq:L_1 subset M_+(L^1_*)}
			L^1\subset M_+(L^1_*).
		\end{equation}
	
	\item[\rm(ii)] The converse also holds if $ d<\infty $.
	\end{itemize}

\end{corollary}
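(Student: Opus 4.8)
The plan is to transport the whole question to the functional-model side by means of the unitaries $\phi:\DD_T\to L$, $\phi_*:\DD_{T^*}\to L_*$ supplied by Lemma~\ref{le:structure of minimal dilation}, together with the Fourier representation $\FFF_{L_*}:M_+(L_*)\to H^2(L_*)$, and then to quote Corollary~\ref{co:reducibility for C_00} and Lemma~\ref{le:general reducibility}. Put $\Theta:=\phi_*\Theta_T\phi^*$; it coincides with $\Theta_T$ and is hence bi-inner (Lemma~\ref{le:model for C.0}). The key observation, read off from Lemma~\ref{le:relation minimal characteristic}(i) (equivalently from~\eqref{eq:formula for Theta_n}), is that for $x\in L\subset M_+(L)\subset\KK=M_+(L_*)$ one has $\FFF_{L_*}x=\Theta(\cdot)x$; that is, the inclusion $L\hookrightarrow M_+(L_*)$ becomes, under $\FFF_{L_*}$, the map $x\mapsto\Theta(\cdot)x\in H^2(L_*)$. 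Since $\FFF_{L_*}$ carries $M_+(L^1_*)$ onto $H^2(L^1_*)$ for any subspace $L^1_*\subset L_*$, and since a function in $H^2(L_*)$ whose boundary values lie a.e.\ in a fixed closed subspace $L^1_*$ must itself lie in $H^2(L^1_*)$ (decompose $H^2(L_*)=H^2(L^1_*)\oplus H^2(L_*\ominus L^1_*)$ along Taylor coefficients and compare boundary values), one obtains, for all subspaces $L^1\subset L$ and $L^1_*\subset L_*$,
\[
L^1\subset M_+(L^1_*)\qquad\Longleftrightarrow\qquad \Theta(e^{it})L^1\subset L^1_*\quad(\text{a.e.\ }t),
\]
an equivalence that uses neither bi-innerness of $\Theta$ nor finiteness of $d$.

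For (i), let $M$ be a nontrivial reducing subspace of $T$ with $\dim\DD_{T|M}=d$, and apply Lemma~\ref{le:general reducibility} to $T=(T|M)\oplus(T|M^\perp)$: there are nontrivial orthogonal decompositions $\DD_T=E\oplus E^\perp$, $\DD_{T^*}=E_*\oplus E_*^\perp$ diagonalizing $\Theta_T$, with $\dim E=\dim E_*=d$. Taking radial limits gives $\Theta_T(e^{it})E\subset E_*$ a.e., so, with $L^1:=\phi(E)$ and $L^1_*:=\phi_*(E_*)$ (nontrivial $d$-dimensional subspaces of $L$ and $L_*$), we have $\Theta(e^{it})L^1=\phi_*\Theta_T(e^{it})E\subset L^1_*$ a.e.; the implication ``$\Leftarrow$'' in the displayed equivalence yields $L^1\subset M_+(L^1_*)$.

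For (ii), assume $d<\infty$ and let $L^1$, $L^1_*$ be nonzero proper subspaces of $L$, $L_*$, both of dimension $d$, with $L^1\subset M_+(L^1_*)$. The implication ``$\Rightarrow$'' gives $\Theta(e^{it})L^1\subset L^1_*$ a.e.; since $\Theta$ is bi-inner, $\Theta(e^{it})$ is unitary a.e., so $\Theta(e^{it})|L^1$ is an isometry of $L^1$ into $L^1_*$, and because $\dim L^1=\dim L^1_*=d<\infty$ it is onto: $\Theta(e^{it})L^1=L^1_*$ a.e. Setting $E:=\phi^{-1}(L^1)$ and $E_*:=\phi_*^{-1}(L^1_*)$ produces a nonzero proper subspace $E$ of $\DD_T$ with $\Theta_T(e^{it})E=E_*$ a.e.; unitarity of $\Theta_T(e^{it})$ then also gives $\Theta_T(e^{it})E^\perp=E_*^\perp$ a.e., so $\Theta_T$ is diagonalized by the nontrivial decompositions $\DD_T=E\oplus E^\perp$, $\DD_{T^*}=E_*\oplus E_*^\perp$. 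By Lemma~\ref{le:general reducibility}, $T=T_1\oplus T_2$ with both summands acting on nonzero spaces and $\dim\DD_{T_1}=\dim E=d$, so $T_1$ lives on the required nontrivial reducing subspace.

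The only genuinely substantive step is the passage, in (ii), from the inclusion $\Theta(e^{it})L^1\subset L^1_*$ to the equality $\Theta(e^{it})L^1=L^1_*$; it relies on an isometry between spaces of equal \emph{finite} dimension being surjective, which is precisely why (ii) is stated for $d<\infty$ only (when $d=\infty$ the subspace $\Theta(e^{it})L^1$ may genuinely vary with $t$ and fail to equal any fixed subspace, so no reducing subspace need arise). Everything else is bookkeeping — keeping straight the three identifications $\phi$, $\phi_*$, $\FFF_{L_*}$ and the coincidence $\Theta\leftrightarrow\Theta_T$, and checking that nontriviality survives each translation.
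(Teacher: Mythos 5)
Your proof is correct and follows essentially the same route as the paper: both directions reduce, via the identifications $\phi$, $\phi_*$, $\FFF_{L_*}$ from Lemmas~\ref{le:structure of minimal dilation} and~\ref{le:relation minimal characteristic}, to the diagonalization criterion of Lemma~\ref{le:general reducibility}, with the converse hinging on the fact that an a.e.\ isometric inclusion $\Theta(e^{it})L^1\subset L^1_*$ between spaces of equal finite dimension is onto. Your only cosmetic difference is that you isolate the equivalence $L^1\subset M_+(L^1_*)\Leftrightarrow\Theta(e^{it})L^1\subset L^1_*$ a.e.\ up front and use it symmetrically, whereas the paper pushes the analytic inclusion $\Theta_1E^1\subset H^2(E^1_*)$ through the unitary $\Omega$ in part (i); the content is the same.
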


\begin{proof} (i). 
	Suppose $ T $ has a reducing subspace with defect  of dimension $ d $. We apply Lemma~\ref{le:general reducibility}, which gives decomposition~\eqref{eq:decomposition of Theta}, where $ \Theta_i(\lambda):E^i\to E^i_* $, and  $ \dim E^1=\dim E^1_* =d $.  
	So $ \Theta_1 H^2(E^1)\subset H^2(E^1_*) $; in particular, if we look at $E^1  $ as the constant functions in $ H^2(E^1) $, we have
	\begin{equation}\label{eq:22}
		\Theta_1 E^1\subset H^2(E^1_*).
	\end{equation}

	Denote then $ L^1=\phi E^1 $ and $ L^1_*=\phi_* E^1_* $ ($ \phi, \phi_* $ in~\eqref{eq:identification D_T L}). We consider  
	the unitary operator $ \Omega $ from Lemma~\ref{le:relation minimal characteristic}. Formulas~\eqref{eq:fourier minimal dilation} yield also $ \Omega E^1_*=L^1_* $, $ \Omega \Theta_1(E^1)= L^1$, and $ \Omega(H^2(E^1_*))=M_+(L^1_*) $.
	Therefore~\eqref{eq:22} implies~\eqref{eq:L_1 subset M_+(L^1_*)}.
	
	(ii) Conversely, suppose we have the required spaces satisfying~\eqref{eq:L_1 subset M_+(L^1_*)}; therefore $ M_+(L^1)\subset M_+(L^1_*) $. 
	Define $ \Theta'(\lambda)=\phi_*\Theta_T(\lambda)\phi^* :L\to L_*$.
	By using $ \FFF_{L_*} $, we obtain $ \Theta' H^2(L^1)\subset H^2(L^1_*) $, which means that $ \Theta'(e^{it})L^1\subset L^1_* $ almost everywhere. Since $\dim L^1=\dim L^1_*=d<\infty$, we have in fact $ \Theta'(e^{it})L^1=L^1_*$ almost everywhere. As in the proof of Corollary~\ref{co:reducibility for C_00}, it follows that $ \Theta'(e^{it})L^1{}^\perp\subset L^1_*{}^\perp $ almost everywhere, whence we may obtain a decomposition similar to~\eqref{eq:decomposition of Theta}. This implies the reducibility of~$ \Theta' $, and thus the reducibility of $\Theta_T$ and of $T$.		
\end{proof}

%
%
%

In particular, we   obtain a nice result if we consider reducing subspaces with defects of dimension~1.

\begin{corollary}\label{co:case dim of reducing=1}
	An operator $ T\in C_{00} $ has a reducing subspace with defects of dimension~1 if and only if  there exists $ y\in L $, $ y_*\in L_* $, $y,y'\neq 0$, and a scalar inner function $ u $, such that $ y=u(V)y_* $. In this case the characteristic function of the reduced operator is precisely~$ u $.
\end{corollary}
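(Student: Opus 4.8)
The plan is to specialise Corollary~\ref{co:geometrical reformulation} to the case $d=1$ and then to identify the resulting scalar function by means of Lemma~\ref{le:relation minimal characteristic}(iii). The starting observation is essentially notational: a one-dimensional subspace of $L$ is the same thing as $\bbC y$ for a nonzero $y\in L$, similarly for $L_*$, and $M_+(\bbC y_*)=\bigoplus_{n\ge0}V^n(\bbC y_*)$ with the vectors $V^ny_*$ mutually orthogonal; hence the inclusion $\bbC y\subset M_+(\bbC y_*)$ is equivalent to the existence of a square-summable sequence $(u_n)_{n\ge0}$, i.e.\ a function $u(\lambda)=\sum_{n\ge0}u_n\lambda^n\in H^2$, with $y=\sum_{n\ge0}u_nV^ny_*=u(V)y_*$. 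Thus the geometric condition in Corollary~\ref{co:geometrical reformulation} becomes, word for word, ``$y=u(V)y_*$ for some $u\in H^2$''; what remains is to see that $u$ is forced to be \emph{inner} and that it is the characteristic function of the associated reduced operator.

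Both of these come out of Lemma~\ref{le:relation minimal characteristic}(iii). Write $\Theta=\phi_*\Theta_T\phi^*\colon L\to L_*$, $\Theta(\lambda)=\sum_n\lambda^n\Theta_n$, $\Theta_n=P_{L_*}(V^*)^nJ$. A one-line computation, using that $L_*$ is wandering with $\KK=M_+(L_*)$ (so $V^*$ annihilates $L_*$ and shifts the grading down), gives for $y=\sum_m u_mV^my_*\in M_+(\bbC y_*)$
\[
(V^*)^ny=\sum_{j\ge0}u_{n+j}V^jy_*,\qquad\text{whence}\qquad\Theta_ny=P_{L_*}(V^*)^ny=u_ny_*.
\]
So $\Theta$ already maps $\bbC y$ into $\bbC y_*$ on $\bbD$; on the boundary $\Theta(e^{it})$ is unitary (since $T\in C_{00}$, its characteristic function is bi-inner), so $\Theta(e^{it})(\bbC y)=\bbC y_*$ and hence $\Theta(e^{it})(\bbC y)^{\perp}=(\bbC y_*)^{\perp}$ for almost every $t$; taking Poisson integrals, this forces $\Theta(\lambda)$ into the block form~\eqref{eq:decomposition of Theta} relative to $L=\bbC y\oplus(\bbC y)^{\perp}$ and $L_*=\bbC y_*\oplus(\bbC y_*)^{\perp}$, and the $(1,1)$ block is, by the displayed formula, exactly the scalar function $u$. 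Since $T\in C_{00}$, the restriction of $T$ to the corresponding reducing subspace is again in $C_{00}$, so by Lemma~\ref{le:general reducibility} (together with Lemma~\ref{le:model for C.0}) its characteristic function coincides with that $(1,1)$ block and is therefore a bi-inner scalar function, that is, a classical inner function. This proves at once that $u$ is inner and that it is the characteristic function of the reduced operator.

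Assembling the two implications: if $T$ has a reducing subspace with one-dimensional defects, Corollary~\ref{co:geometrical reformulation}(i) supplies $\bbC y\subset L$, $\bbC y_*\subset L_*$ with $\bbC y\subset M_+(\bbC y_*)$, hence $y=u(V)y_*$ with $u$ inner by the previous paragraph; conversely, from $y=u(V)y_*$ with $u$ inner one gets $y=\sum_n u_nV^ny_*\in M_+(\bbC y_*)$, so Corollary~\ref{co:geometrical reformulation}(ii) (applied with the finite value $d=1$) yields the desired reducing subspace, whose characteristic function is $u$ by the same computation. The only genuinely delicate point, and the one I would flag as the main obstacle, is precisely the step ``$y=u(V)y_*$ with $u\in H^2$ $\Longrightarrow$ $u$ inner'': this is not detectable at the level of norms, which only give $\|u\|_{H^2}=\|y\|/\|y_*\|$, and it really does require recognising $u$, via Lemma~\ref{le:relation minimal characteristic}(iii), as the characteristic function of a $C_{00}$ contraction. (The degenerate case $\dim\DD_T=1$, where the relevant reducing subspace is $\HH$ itself, is covered directly by the same identity $\Theta_ny=u_ny_*$ with $u=\Theta_T$.)
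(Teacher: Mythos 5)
Your proof is correct, and its skeleton --- specialising Corollary~\ref{co:geometrical reformulation} to $d=1$ and rewriting $\bbC y\subset M_+(\bbC y_*)$ as $y=u(V)y_*$ with $u\in H^2$ --- is the same as the paper's; the divergence is in how the two remaining sub-steps are handled. For the innerness of $u$, the paper uses only that $y$ lies in the wandering subspace $L$: then $V^ny\perp V^my$ for $n\neq m$, which under $\FFF_{y_*}$ says $z^nu\perp z^mu$ in $H^2$, i.e.\ the Fourier coefficients of $|u|^2$ vanish away from $0$, so $|u|$ is constant a.e.\ --- an elementary orthogonality computation that never invokes the bi-innerness of $\Theta_T$. (So your closing remark that innerness ``is not detectable at the level of norms'' is overstated: it is detected by the orthogonality of the $V^ny$, and also pointwise by $\|\Theta(e^{it})y\|=\|y\|$ once you have $\Theta(e^{it})y=u(e^{it})y_*$.) You instead identify $u$, via the coefficient formula $\Theta_n=P_{L_*}(V^*)^nJ$ of Lemma~\ref{le:relation minimal characteristic}(iii), as the $(1,1)$ block of the characteristic function relative to $\bbC y\oplus(\bbC y)^\perp$ and $\bbC y_*\oplus(\bbC y_*)^\perp$, and then invoke the fact that characteristic functions of $C_{00}$ contractions are bi-inner; your computation $\Theta_ny=u_ny_*$ is correct. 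This route buys you the final assertion for free: the characteristic function of the reduced summand is that block, hence coincides with $u$ (note that $u$ is literally inner, rather than a constant multiple of one, only after normalising $\|y\|=\|y_*\|$, which the paper does and you should too). The paper instead identifies the characteristic function by transporting $\HH_1=M_+(y_*)\ominus M_+(y)$ through $\FFF_{y_*}$ to $H^2\ominus uH^2$ and comparing with the functional model. Both routes are sound; yours leans on Lemma~\ref{le:relation minimal characteristic}(iii) and Lemma~\ref{le:general reducibility}, the paper's on the Fourier representation~\eqref{eq:Fourier representation} and the wandering-vector characterisation of inner functions.
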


\begin{proof}
	By Corollary~\ref{co:geometrical reformulation} applied to $ d=1 $, the existence of a reducing subspace with defects of dimension~1 is equivalent to the existence of  
	elements of norm 1 $y\in L,  y_*\in L_* $, such that $ y\in M_+(y_*) $. The Fourier representation $ \FFF_{y_*} $ maps   $ M_+(y_*) $ onto $ H^2 $; more precisely, from~\eqref{eq:Fourier representation} it follows that $\FFF_{y_*} (f(V)y_*)=f  $. In particular, $ y $ is a wandering vector for $ V $, which implies that $u:= \FFF_{y_*} y $ is an inner function.
	
	If we denote by $ \HH_1 $ the reducing subspace of dimension 1 obtained, then   have $ \HH_1=M^+(y_*)\ominus M^+(y) $. Through the Fourier representation $ \FFF_{y_*} $, this becomes $ H^2\ominus uH^2 $. By comparing with the general formula for the functional model, we see that the characteristic function of the reduced operator is~$ u $. 
	\end{proof}

\begin{remark}

	Part of the results in this section may be extended to more general contractions. Thus Lemma~\ref{le:general reducibility} is true for a general completely nonunitary contraction; we have then to use in the proof the more complicated general form of the functional model associated to~$ T $. Appropriately modified versions of Corollaries~\ref{co:reducibility for C_00} and~\ref{co:geometrical reformulation} can also be stated. However,  since the statements are less neat, we have preferred to restrict ourselves to the case $ T\in C_{00} $, which will be used in the applications in the sequel of the paper.
\end{remark}

\section{A class of contractions}\label{se:a class}
 In the rest of the paper we will work in the Hardy space $ H^2 $, applying  the above results to a particular class of contractions. 
 By $ T_\varphi $ we will denote the usual Toeplitz operator on $ H^2 $, that is, the compression of the operator of multiplication by $\varphi$ on $H^2$. Recall here that, for a scalar inner function $K_\theta=H^2\ominus \theta H^2=\HH_\theta$ (see \eqref{eq:model space} with $\EE=\EE_*=\mathbb C$).

Let then $ \theta, B $ be two  scalar inner functions that satisfy the basic assumption
\begin{equation}\label{eq:assumption dim2}
	\ker T_{\theta \overline B}=\{0\}.
\end{equation}
Note that $f\in\ker T_{\theta \overline B}$ if and only if $\theta f\in \ker T_{\overline B}=K_B$, whence \eqref{eq:assumption dim2} is equivalent to $\theta H^2\cap K_B=\{0\}$.

We will consider the operator $ A^\theta_B\in\LL(K_\theta) $, defined by 
	\begin{equation}\label{eq:definition of A}
	A^\theta_B=P_{K_\theta}T_B|K_\theta.
\end{equation}
 
The operator $A^\theta_B$ is usually called the \emph{truncated Toeplitz operator} on $ K_\theta $ with symbol $ B $. It is known~\cite{S2} that truncated Toeplitz operators are \emph{complex symmetric}; that is, there exist a complex conjugation $ C_\theta $ on~$ K_\theta $ such that 
\begin{equation}\label{eq:complex symmetric}
		(A^\theta_B)^*=C_\theta	A^\theta_B C_\theta.
\end{equation}

The next theorem identifies concretely a minimal isometric dilation of $ A^\theta_B $; it is a generalization of~\cite[Lemma 3.1]{LYL}.

	\begin{theorem}\label{pr:T_B is a minimal dilation}
	 Let $B$ and $\theta$ two inner functions satisfying \eqref{eq:assumption dim2}.	The operator $ T_B\in\LL(H^2) $ is a minimal isometric dilation of $ A^\theta_B $. For this minimal isometric dilation we have 
	\begin{equation}\label{eq:L,L_* in the particular case}
		L=\theta K_B, \quad L_*=K_B.
	\end{equation}
\end{theorem}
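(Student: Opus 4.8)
The plan is to verify directly that $T_B$ satisfies the three requirements: it is an isometric dilation of $A^\theta_B$, the dilation is minimal, and the wandering subspaces of the decomposition~\eqref{eq:structure of minimal dilation} are the asserted ones. Since $T_B$ is multiplication by an inner function on $H^2$, it is isometric; and since $K_\theta\subset H^2$, to see it is a dilation it suffices, by the remark following the definition of isometric dilation, to check that $T_B(K_\theta)^\perp\subset (K_\theta)^\perp$, i.e. that $\theta H^2$ is invariant under $T_B$. This is immediate: $B\theta H^2\subset\theta H^2$ because $B\in H^\infty$. Hence $A^\theta_B=P_{K_\theta}T_B|K_\theta$ is indeed dilated by $T_B$.

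Next I would establish minimality, i.e. $H^2=\bigvee_{n\ge0}T_B^n K_\theta$. Since the closed span of $\bigcup_n B^n K_\theta$ is a closed $T_B$-invariant subspace $\MM$ of $H^2$ containing $K_\theta$, its orthogonal complement $H^2\ominus\MM$ is $T_B^*$-invariant and orthogonal to $K_\theta=H^2\ominus\theta H^2$, hence contained in $\theta H^2$. Writing a general element of $\MM^\perp$ as $\theta g$, the condition $\theta g\perp B^n K_\theta$ for all $n$, together with the assumption~\eqref{eq:assumption dim2} in the form $\theta H^2\cap K_B=\{0\}$, should force $g=0$. Concretely, I expect to argue that $\MM^\perp$ reduces $T_B$ (being $T_B^*$-invariant and, as a subspace of $\theta H^2$ which is $T_B$-invariant, also $T_B$-invariant), and that $T_B$ restricted to $\theta H^2$ is still pure — so any reducing subspace on which a shift-type operator acts isometrically with a co-isometric complement must be trivial unless it carries a unitary part, which is excluded. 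Alternatively, and perhaps more cleanly, one shows directly $\bigvee_n B^n K_\theta\supset H^2$ by noting that the complement inside $\theta H^2$ would be a model space $\theta K_{B'}$ for some inner divisor relation, contradicting~\eqref{eq:assumption dim2}.

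Once minimality holds, $T_B$ is \emph{the} minimal isometric dilation (unique up to the appropriate unitary equivalence), and it remains to identify $L_*$ and $L$ from~\eqref{eq:structure of minimal dilation}. For $L_*$: the decomposition $H^2=M_+(L_*)$ with $L_*$ wandering for $T_B$ is exactly the Wold-type decomposition of the pure isometry $T_B$, and the wandering subspace is $H^2\ominus BH^2=K_B$; so $L_*=K_B$ and $\dim L_*=\dim K_B=\dim\DD_{(A^\theta_B)^*}$, the last equality to be checked against~\eqref{eq:defects for model} / Lemma~\ref{le:structure of minimal dilation}. For $L$: we need $H^2=K_\theta\oplus M_+(L)$ with $L$ wandering. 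Since $K_\theta^\perp=\theta H^2$ and $\theta H^2$ is $T_B$-invariant, $M_+(L)=\theta H^2$, and the wandering subspace of the pure isometry $T_B|\theta H^2$ is $\theta H^2\ominus B\theta H^2=\theta(H^2\ominus BH^2)=\theta K_B$; so $L=\theta K_B$. One should double-check that $T_B|\theta H^2$ is unitarily equivalent (via multiplication by $\bar\theta$) to $T_B$ on $H^2$, so its wandering subspace really is $\theta K_B$, and that $\dim\theta K_B=\dim K_B=\dim\DD_{A^\theta_B}$ as required.

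The main obstacle I anticipate is the minimality argument — specifically, using hypothesis~\eqref{eq:assumption dim2} to rule out a nontrivial piece of $\theta H^2$ orthogonal to all $B^nK_\theta$. Everything else is a routine unwinding of Beurling/Wold structure for multiplication by inner functions. The dimension bookkeeping at the end (matching $\dim K_B$ with the defect dimensions of $A^\theta_B$) is also something to confirm, but it should follow formally from Lemma~\ref{le:structure of minimal dilation} once the geometric decomposition~\eqref{eq:structure of minimal dilation} is in place, since that lemma asserts $\dim L=\dim\DD_T$ and $\dim L_*=\dim\DD_{T^*}$ automatically.
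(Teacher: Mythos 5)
Your overall plan is sound, and two of its three parts go through essentially as in the paper: the dilation claim (multiplication by $B$ is isometric and $T_B(\theta H^2)\subset \theta H^2$) and the identification of $L_*=K_B$ and $L=\theta K_B$ from the Wold decompositions $H^2=\bigoplus_n B^nK_B$ and $\theta H^2=\bigoplus_n B^n(\theta K_B)$ are exactly the paper's arguments. The problem is the minimality step, which you yourself flag as the crux and which is the only place where hypothesis \eqref{eq:assumption dim2} is used: neither of the two concrete mechanisms you propose for it works. (i) The claim that $\MM^\perp$ reduces $T_B$, where $\MM=\bigvee_n B^nK_\theta$, is unjustified: a subspace of the $T_B$-invariant space $\theta H^2$ is not thereby $T_B$-invariant; all you know is that $\MM^\perp$ is $T_{\bar B}$-invariant. (ii) Even if $\MM^\perp$ did reduce $T_B$, purity would not force it to be trivial: a unilateral shift of multiplicity greater than one --- which $T_B$ is whenever $\dim K_B>1$ --- has many nontrivial reducing subspaces, namely $M_+(F)$ for any subspace $F\subset K_B$. (iii) There is likewise no reason the orthogonal complement should be of the form $\theta K_{B'}$; $T_{\bar B}$-invariant subspaces do not have this shape in general.

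The step can be completed along the lines of your first (vague) sentence, and this is where \eqref{eq:assumption dim2} must actually be invoked. If $h\perp B^nK_\theta$ for all $n\ge 0$, then $T_{\bar B}^{\,n}h\perp K_\theta$, i.e.\ $T_{\bar B}^{\,n}h\in\theta H^2$ for every $n$. Hence
\[
T_{\bar B}^{\,n}h-BT_{\bar B}^{\,n+1}h=(I-T_BT_{\bar B})T_{\bar B}^{\,n}h\in K_B\cap\theta H^2=\{0\},
\]
the intersection being trivial precisely because of \eqref{eq:assumption dim2} (as noted after that equation in the paper). Therefore $h=B^nT_{\bar B}^{\,n}h\in B^nH^2$ for all $n$, and $h\in\bigcap_nB^nH^2=\{0\}$. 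This is the content of the paper's inductive identity $K_\theta+BK_\theta+\dots+B^nK_\theta=K_{B^n\theta}$, which uses \eqref{eq:assumption dim2} at each stage in exactly the same way. Your concluding remarks on the dimension bookkeeping are fine: once the decomposition \eqref{eq:structure of minimal dilation} is exhibited with wandering $L,L_*$, Lemma~\ref{le:structure of minimal dilation} supplies the dimension identities automatically.
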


\begin{proof}
	$ T_B $ is an isometry on $ H^2 $,  and $T_B(K_\theta^\perp)=T_B(\theta H^2)\subset \theta H^2=K_\theta^\perp$. 
	Thus it follows from~\eqref{eq:definition of A} that $T_B$ is a dilation of~$ A^\theta_B $.
	
	We show now by induction according to $ n $ that
	\begin{equation}\label{eq:induction TB^n}
		K_\theta + BK_\theta+\dots+ B^nK_\theta= K_{B^n\theta}.
	\end{equation}
	Equality~\eqref{eq:induction TB^n} is obviously true for $ n=0 $. Suppose that it is true up to $ n-1 $. We are left then to prove that
	\begin{equation}\label{eq:ind1}
		K_{B^{n-1}\theta}+B^n K_\theta= K_{B^n\theta}.
	\end{equation}	
	
	It is immediate from the definitions that the left hand side is contained in the right hand side. On the 
	other hand, we have
	\[
	K_{B^n\theta}=
	K_{B^{n-1}\theta}\oplus B^{n-1}\theta K_B
	=B^n K_\theta\oplus K_{B^n}.
	\]
	If $ f\in 	K_{B^n\theta} $ is orthogonal to $ 	K_{B^{n-1}\theta} $ as well as to $ B^n K_\theta $, it follows that 
	\[
	f\in (\theta B^{n-1}K_B)\cap K_{B^n}.
	\]
	So $ f=\theta B^{n-1}g $ with $ g\in K_B $; and also $ f\perp B^n H^2 $, which means $ \theta g\perp BH^2 $, or $ \theta g\in K_B $.   It follows that $0=T_{\overline B}(\theta g)=T_{\theta \overline B}g$. By~\eqref{eq:assumption dim2}, this implies $ g=0 $, whence $ f=0 $.
	
	Since 
	\[
	\big(\bigvee_n  K_{B^n\theta}\big)^\perp=\bigcap_n B^n\theta H^2=\{0\},
	\]
	it follows that
	\begin{equation}\label{eq:TB^nKtheta}
		H^2=\bigvee_{n=0}^\infty T_B^n K_\theta.
	\end{equation}
	Therefore $ T_B $ is a \emph{minimal} isometric dilation of~$ A_B^\theta $.
	
	Then
	\begin{equation}\label{eq:decomposition with K_B}
		H^2=\bigoplus_{n=0}^\infty B^n K_B = \bigoplus_{n=0}^\infty T_B^n K_B= M_+(K_B),
	\end{equation}
	whence $ L_*=K_B $.
	
	On the other hand, we have
	\begin{equation}\label{eq:KB Ktheta}
		K_{B\theta}=K_\theta\oplus \theta K_B=K_B\oplus B K_\theta,
	\end{equation}
	Therefore
	\begin{equation}\label{eq:structure in the case of B}
		H^2=\bigoplus_{n=0}^\infty B^n K_B=K_\theta \oplus \theta H^2
		=K_\theta\oplus \bigoplus_{n=0}^\infty T_B^n\theta  K_B= K_\theta\oplus M_+(\theta K_B),
	\end{equation}
	whence $ L=\theta K_B $.
\end{proof}

\begin{corollary}\label{co:A theta B is C00}
	With the above assumptions, $A_B^\theta$ is in $C_{00}$.
\end{corollary}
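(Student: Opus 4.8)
The plan is to deduce the statement from Theorem~\ref{pr:T_B is a minimal dilation}, Lemma~\ref{le:structure of minimal dilation}, and the complex symmetry~\eqref{eq:complex symmetric}. By Theorem~\ref{pr:T_B is a minimal dilation}, $T_B$ is the minimal isometric dilation of $A^\theta_B$, and the identities $H^2=M_+(K_B)=K_\theta\oplus M_+(\theta K_B)$ proved there are exactly the decomposition~\eqref{eq:structure of minimal dilation}, realized with $\KK=H^2$, $V=T_B$, $\HH=K_\theta$, $L=\theta K_B$ and $L_*=K_B$. Hence, once we know that $A^\theta_B$ is a completely nonunitary contraction, Lemma~\ref{le:structure of minimal dilation} applies and yields $A^\theta_B\in C_{\cdot0}$, that is $(A^\theta_B)^{*n}\to 0$ strongly.

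So the first task is to check complete nonunitarity. (We may assume $B$ nonconstant: if $B$ is constant then $K_B=\{0\}$, and by Theorem~\ref{pr:T_B is a minimal dilation} $\theta$ is then constant as well, so $K_\theta=\{0\}$ and there is nothing to prove.) Suppose $M\subseteq K_\theta$ reduces $A^\theta_B$ and $A^\theta_B|M$ is unitary. For $x\in M$ we get $\|P_{K_\theta}(Bx)\|=\|x\|=\|Bx\|$, hence $P_{\theta H^2}(Bx)=0$, i.e.\ $Bx\in K_\theta$; therefore $A^\theta_B x=Bx$ on $M$, and surjectivity of $A^\theta_B|M$ gives $BM=M$. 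Iterating, $M=B^nM\subseteq B^nH^2$ for all $n$, so $M\subseteq\bigcap_n B^nH^2=\{0\}$ since $B$ is a nonconstant inner function. Thus $A^\theta_B$ is completely nonunitary and, by the first paragraph, $A^\theta_B\in C_{\cdot0}$.

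It remains to promote $C_{\cdot0}$ to $C_{00}$, i.e.\ to get $(A^\theta_B)^n\to 0$ strongly as well, and this is exactly where~\eqref{eq:complex symmetric} is used: from $(A^\theta_B)^*=C_\theta A^\theta_B C_\theta$ and $C_\theta^2=I$ one has $(A^\theta_B)^{*n}=C_\theta(A^\theta_B)^nC_\theta$, so, $C_\theta$ being an antilinear isometry, $\|(A^\theta_B)^n x\|=\|(A^\theta_B)^{*n}(C_\theta x)\|$ for every $x$; hence $(A^\theta_B)^{*n}\to 0$ strongly forces $(A^\theta_B)^n\to 0$ strongly, and $A^\theta_B\in C_{00}$. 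The point I would single out as the main issue is conceptual rather than technical: Lemma~\ref{le:structure of minimal dilation} only records membership in $C_{\cdot0}$, so the complex-symmetry input is genuinely needed to reach the two-sided conclusion; the nonunitarity check itself is routine.

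As a self-contained alternative that avoids Lemma~\ref{le:structure of minimal dilation} (and, incidentally, uses only that $B$ is nonconstant inner, not~\eqref{eq:assumption dim2}), one can prove $(A^\theta_B)^n\to 0$ by hand. Since $T_B$ leaves $\theta H^2=K_\theta^\perp$ invariant, $(A^\theta_B)^n x=P_{K_\theta}(B^n x)$ for $x\in K_\theta$, so $\|(A^\theta_B)^n x\|^2=\|x\|^2-\|P_{\theta H^2}(B^n x)\|^2$. Put $g=\theta\bar x$; because $x\in K_\theta=\ker T_{\bar\theta}$ we have $\bar\theta x\perp H^2$, hence $g\in zH^2$ and $\|g\|=\|x\|$. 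A short computation with Fourier coefficients then gives $\|(A^\theta_B)^n x\|^2=\|P_{B^nH^2}g\|^2-|\<g,B^n\>|^2$, which tends to $0$ because $\bigcap_n B^nH^2=\{0\}$ makes $P_{B^nH^2}\to 0$ strongly while $B^n\in B^nH^2$. Complex symmetry then handles the adjoint exactly as before.
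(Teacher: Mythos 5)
Your argument is correct and is essentially the paper's own proof: the decomposition~\eqref{eq:structure in the case of B} from Theorem~\ref{pr:T_B is a minimal dilation} together with Lemma~\ref{le:structure of minimal dilation} gives $A^\theta_B\in C_{\cdot0}$, and the complex symmetry~\eqref{eq:complex symmetric} transfers this to the powers of $A^\theta_B$ itself. Your explicit verification that $A^\theta_B$ is completely nonunitary (a hypothesis of Lemma~\ref{le:structure of minimal dilation} that the paper leaves implicit) and your self-contained alternative are sound supplements, but the core route is the same.
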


\begin{proof}
	In view of equation~\eqref{eq:structure in the case of B}, it follows from Lemma~\ref{le:structure of minimal dilation} that $A_B^\theta$ is in $C_{\cdot0}$. On the other hand, it follows from~\eqref{eq:complex symmetric} that
	\[
		((A^\theta_B)^*)^n=C_\theta	(A^\theta_B)^n C_\theta,
	\]
	whence $A_B^\theta$ is also in $C_{0\cdot}$.
\end{proof}

Using the identification of a minimal unitary dilation in Theorem~\ref{pr:T_B is a minimal dilation} we may compute the characteristic function of~$ A^\theta_B $. The next theorem generalizes~\cite[Theorem 2.4]{Gu} (see Section~\ref{se:monomials} below).

\begin{theorem}\label{th:identification of Phi}
	  Let $B$ and $\theta$ two inner functions satisfying \eqref{eq:assumption dim2}. The characteristic function of $ A^\theta_B $ is $ \Phi:\bbD\to\LL(K_B) $ defined by
	\begin{equation}\label{eq:identification of Phi}
		\Phi(\lambda)=A^B_{\frac{\theta}{1-\lambda \overline{B}}}.
	\end{equation}
\end{theorem}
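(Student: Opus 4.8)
The plan is to compute the characteristic function of $A^\theta_B$ using the formula~\eqref{eq:formula for Theta_n} from Lemma~\ref{le:relation minimal characteristic}, with the concrete minimal isometric dilation $V=T_B$ and the wandering subspaces $L=\theta K_B$, $L_*=K_B$ identified in Theorem~\ref{pr:T_B is a minimal dilation}. The normalization is as follows: the characteristic function $\Theta_T$ takes values in $\LL(\DD_T,\DD_{T^*})$, and by~\eqref{eq:defects for model}--\eqref{eq:fourier minimal dilation} the conjugated version $\Theta=\phi_*\Theta_T\phi^*$ takes values in $\LL(L,L_*)=\LL(\theta K_B, K_B)$; its Taylor coefficients are $\Theta_n=P_{L_*}(V^*)^n J$ where $J$ is the embedding of $L$ into $M_+(L_*)$. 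Finally I will transport $\Theta$ to an operator-valued function on $\LL(K_B)$ by pre-composing with the natural unitary $\theta\cdot:K_B\to\theta K_B$, which is legitimate up to the coincidence relation and identifies the two copies of $K_B$ that play the roles of $\EE$ and $\EE_*$; this is exactly what~\eqref{eq:identification of Phi} encodes.

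The core computation is then the following. Fix $g\in K_B$, so that $\theta g\in L=\theta K_B\subset H^2$. For each $n\ge 0$ I need $\Theta_n(\theta g)=P_{K_B}(T_B^*)^n(\theta g)$. Now $T_B^*=T_{\overline B}$, so $(T_B^*)^n=T_{\overline{B}^n}$, and $P_{K_B}=I-T_BT_B^* = I - BT_{\overline B}$ is the projection onto $K_B=H^2\ominus BH^2$. Thus $\Theta_n(\theta g)=P_{K_B}\,T_{\overline B^{\,n}}(\theta g)=P_{K_B}\bigl(P_{H^2}(\overline B^{\,n}\theta g)\bigr)$. Summing the Taylor series, for $\lambda\in\bbD$ we get
\[
\Theta(\lambda)(\theta g)=\sum_{n=0}^\infty \lambda^n P_{K_B}T_{\overline B^{\,n}}(\theta g)
= P_{K_B}\,T_{\sum_{n\ge 0}\lambda^n\overline B^{\,n}}(\theta g)
= P_{K_B}\,T_{(1-\lambda\overline B)^{-1}}(\theta g),
\]
where the interchange of sum and projection is justified because $\|\lambda\overline B\|_\infty=|\lambda|<1$ makes the Neumann-type series $\sum_n \lambda^n\overline B^{\,n}$ converge in $L^\infty$, hence the operators converge in norm. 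Writing $\theta/(1-\lambda\overline B)$ for the resulting $L^\infty$ symbol, we obtain $\Theta(\lambda)(\theta g)=P_{K_B}\,T_{\theta/(1-\lambda\overline B)}\,g = A^B_{\theta/(1-\lambda\overline B)}\,g$ once we note $P_{K_B}T_\psi g = P_{K_B}T_\psi|K_B\,g$ for $g\in K_B$, i.e.\ it is the truncated Toeplitz operator on $K_B$ with symbol $\psi=\theta/(1-\lambda\overline B)$. After identifying $L=\theta K_B$ with $K_B$ via $g\mapsto \theta g$, this is exactly~\eqref{eq:identification of Phi}.

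To finish I must check two coherence points. First, that the object $\Phi(\lambda)=A^B_{\theta/(1-\lambda\overline B)}$ really is contractive analytic and pure with values in $\LL(\DD_{A^\theta_B},\DD_{(A^\theta_B)^*})$ after undoing the unitaries $\phi,\phi_*$ — but this is automatic: Lemma~\ref{le:model for C.0} guarantees $\Theta_{A^\theta_B}$ is bi-inner and pure (we know $A^\theta_B\in C_{00}$ by Corollary~\ref{co:A theta B is C00}), and coincidence preserves these properties, so once the computation above shows $\phi_*\Theta_{A^\theta_B}\phi^*$ coincides with $\Phi$, we are done. Second, I should verify the dimension bookkeeping, namely $\dim\DD_{A^\theta_B}=\dim L=\dim K_B=\dim L_*=\dim\DD_{(A^\theta_B)^*}$, which follows from~\eqref{eq:L,L_* in the particular case} together with Lemma~\ref{le:structure of minimal dilation}. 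The main obstacle, modest as it is, lies in the bookkeeping of the three unitary identifications — $\FFF_{L}$, $\phi$, and the multiplication map $\theta\,\cdot\,$ — and in justifying cleanly that $\sum_{n\ge0}\lambda^n\overline B^{\,n}$ may be pulled inside $P_{K_B}$ and recognized as the Toeplitz symbol $(1-\lambda\overline B)^{-1}$; everything else is a direct application of~\eqref{eq:formula for Theta_n}.
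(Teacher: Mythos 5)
Your proposal is correct and follows essentially the same route as the paper: both apply Lemma~\ref{le:relation minimal characteristic}\,(iii) with $V=T_B$, $L=\theta K_B$, $L_*=K_B$ from Theorem~\ref{pr:T_B is a minimal dilation}, identify the embedding $J$ with multiplication by $\theta$, compute $\Phi_n f=P_{K_B}\overline{B}^n\theta f=A^B_{\theta\overline{B}^n}f$, and sum the resulting Neumann series to get $A^B_{\theta/(1-\lambda\overline{B})}$. Your extra care with the $L^\infty$-convergence of $\sum_n\lambda^n\overline{B}^n$ and with the unitary identifications is sound but not a different argument.
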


\begin{proof}
	We have identified in Theorem~\ref{pr:T_B is a minimal dilation} $ L, L_* $ with $ \theta K_B,  K_B $ respectively. We intend to apply Lemma~\ref{le:relation minimal characteristic} (iii). Since we want to consider the characteristic function of $ A $ as an analytic function with values in $ \LL( K_B) $, the embedding $ J $ is precisely multiplication by $ \theta $. 
	Then, if $ \Phi(\lambda)=\sum_{n=0}^\infty \lambda^n\Phi_n $,~\eqref{eq:formula for Theta_n} yields
	\[
	\Phi_n f=P_{K_B} \overline{B}^n \theta f
	\]
	for $ f\in K_B $. Thus $ \Phi_n= A^B_{\theta \overline{B}^n} $. Therefore
	\[
	\Phi(\lambda)=\sum_{n=0}^\infty \lambda^n A^B_{\theta \overline{B}^n}
	= A^B_{\theta\sum_{n=0}^\infty\lambda^n \overline{B}^n}
	=A^B_{\frac{\theta}{1-\lambda \overline{B}}}.\qedhere
	\]
\end{proof}

 	We may also obtain a more precise form of Corollary~\ref{co:case dim of reducing=1}.
 	
\begin{corollary}\label{cor-reducing-general-case}
Let $B$ and $\theta$ two inner functions satisfying \eqref{eq:assumption dim2}. Then the following assertions are equivalent:
\begin{itemize}
	\item[\rm (i)]  The operator $ A^\theta_B $ has a reducing subspace such that the restriction has one-dimensional defects.
	
	\item[\rm (ii)] There exist $ u $ inner  and $h_1,h_2\in K_B $, $h_1,h_2\neq 0$, such that
		\begin{equation}\label{eq:main one-dim}
		\theta=\frac{h_2}{h_1}(u\circ B),
	\end{equation}

\item[\rm(iii)] There exist $ u, v_1, v_2 $ inner, with 
	\[
\ker T_{v_1\bar B}\cap 	\ker T_{v_2\bar B}\not=\{0\}, 
\]
such that
	\begin{equation}\label{eq:main one-dim2}
	\theta=\frac{v_2}{v_1}(u\circ B).
\end{equation}
\end{itemize}

\end{corollary}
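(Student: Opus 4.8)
The plan is to obtain (i)$\Leftrightarrow$(ii) by unwinding Corollary~\ref{co:case dim of reducing=1} through the concrete minimal dilation produced in Theorem~\ref{pr:T_B is a minimal dilation}, and then to settle (ii)$\Leftrightarrow$(iii) by an elementary inner--outer factorization. For the first equivalence I would take $V=T_B$ as the minimal isometric dilation of $A^\theta_B$, so that $L=\theta K_B$ and $L_*=K_B$ by Theorem~\ref{pr:T_B is a minimal dilation}. The one computation needed is that, for an inner function $u$, the operator $u(T_B)$ is multiplication by the inner function $u\circ B$: this is immediate for polynomials from $T_B^n=T_{B^n}$ and passes to all of $H^\infty$ since $T_B$ is an isometry. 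With this identification, Corollary~\ref{co:case dim of reducing=1} says that (i) holds exactly when there are nonzero $y=\theta h_1\in\theta K_B$ and $y_*=h_2\in K_B$ and an inner $u$ with $\theta h_1=(u\circ B)h_2$; dividing by $h_1$ gives precisely~\eqref{eq:main one-dim}.

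For (iii)$\Rightarrow$(ii) I would use the observation recorded right after~\eqref{eq:assumption dim2}, which holds verbatim with any inner symbol $w$ in place of $\theta$: a function $g$ lies in $\ker T_{w\bar B}$ if and only if $wg\in K_B$. Thus, starting from a nonzero $f\in\ker T_{v_1\bar B}\cap\ker T_{v_2\bar B}$, the functions $h_1:=v_1f$ and $h_2:=v_2f$ are nonzero elements of $K_B$ with $h_2/h_1=v_2/v_1$, and substituting into~\eqref{eq:main one-dim2} yields~\eqref{eq:main one-dim}.

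The step I expect to require the most care is (ii)$\Rightarrow$(iii). From~\eqref{eq:main one-dim} the quotient $h_2/h_1$ equals $\theta\,\overline{u\circ B}$, hence has modulus $1$ almost everywhere on $\bbT$; writing the inner--outer factorizations $h_j=I_jO_j$, the resulting equality $|O_1|=|O_2|$ a.e.\ forces $O_2=cO_1$ for some unimodular constant $c$. Then $v_1:=I_1$ and $v_2:=\bar c I_2$ are inner with $v_2/v_1=h_2/h_1$, which gives~\eqref{eq:main one-dim2}, and the outer function $f:=O_1$ is a nonzero common element of $\ker T_{v_1\bar B}$ and $\ker T_{v_2\bar B}$ because $v_1f=h_1\in K_B$ and $v_2f=\bar c I_2O_1=I_2O_2=h_2\in K_B$. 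The only genuinely substantive point here is this absorption of the two outer parts into a single constant, which is available exactly because~\eqref{eq:main one-dim} forces $h_2/h_1$ to be unimodular on $\bbT$; everything else is bookkeeping with the dictionary furnished by Theorem~\ref{pr:T_B is a minimal dilation} and Corollary~\ref{co:case dim of reducing=1}.
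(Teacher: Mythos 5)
Your argument is correct and is essentially the paper's own proof: (i)$\Leftrightarrow$(ii) via Corollary~\ref{co:case dim of reducing=1} applied to the dilation $V=T_B$ with $L=\theta K_B$, $L_*=K_B$, and (ii)$\Leftrightarrow$(iii) via the factorization $h_i=v_ig$ with $g$ outer together with the observation $vh\in K_B\Leftrightarrow h\in\ker T_{v\bar B}$ (you merely spell out, via $|O_1|=|O_2|$ a.e., the step the paper states without detail). The one inconsequential slip: having written $O_2=cO_1$ you should take $v_2=cI_2$, not $\bar cI_2$, so that $v_2f=cI_2O_1=I_2O_2=h_2$.
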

\begin{proof}
The equivalence of (i) and (ii) follows by applying in this case Corollary~\ref{co:case dim of reducing=1}. We have $ L_*=K_B $, $ L=\theta K_B $, and so the existence of the required reducing subspace is equivalent to the existence of $ h_1, h_2\in K_B $, $h_1,h_2\neq 0$ and an inner function $ u $, such that $ \theta h_1= u(V)h_2 $. Since   $ V=T_B $, $ u(V) $ is multiplication by $ u\circ B $, and we have
		\begin{equation}\label{eq:theta h_1=h_2 u}
			\theta h_1=h_2(u\circ B).
		\end{equation}
	
	If (ii) is true, then we must have $ h_i=v_i g $ for some inner functions $ v_1, v_2 $ and $ g $ outer, so~\eqref{eq:main one-dim2} is true. Note that, if $ v $ is an analytic and bounded function in $\mathbb D$, then
	\begin{equation}\label{eq:small observation}
			vh\in K_B \Leftrightarrow h\in\ker T_{v\bar B}.
	\end{equation}
	So $ v_1 g, v_2 g\in K_B $ is equivalent to $ g\in \ker T_{v_1\bar B}\cap 	\ker T_{v_2\bar B} $.
	
	The implication (iii)$ \implies $(ii) follows easily by reversing the steps. 
	\end{proof}

 Note that the function $u$ in (ii) and (iii) of the previous corollary is non constant because otherwise $\theta h_1\in K_B$, and thus 
 $h_1\in\ker T_{\theta\overline B}$ which contradicts hypothesis \eqref{eq:assumption dim2}.

 	\section{A particular case}\label{se:part case}

 
 Let us consider now the particular case when $ B $ is a finite Blaschke product. 
  Denote $ \phi_\alpha(z)=(z-\alpha)/(1-\bar\alpha z) $.
  If $ B $ has roots (counting with multiplicities) $ w_1, \dots, w_k $, it is known that 
 \begin{equation}\label{eq:elements in K_B}
 	K_B=\Big\{ \frac{p(z)}{\prod_{i=1}^k (1-\bar{w_i}z)} : p\text{ polynomial of degree }\le k-1   \Big\}.
 \end{equation}

 In this case condition~\eqref{eq:assumption dim2} has a simple equivalent form.
 
 \begin{lemma}\label{le:condition for blaschke}
 	If $ B $ is a finite Blaschke product, then~\eqref{eq:assumption dim2} is satisfied if and only if 
 	\begin{equation}\label{eq:assumption dim}
 		\dim K_B\le \dim K_\theta.
 	\end{equation}
  
 \end{lemma}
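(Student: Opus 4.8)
The plan is to pass to an equivalent statement about model subspaces and then use the explicit description of $K_B$ for a finite Blaschke product. By the observation made right after \eqref{eq:assumption dim2}, condition \eqref{eq:assumption dim2} is equivalent to $\theta H^2\cap K_B=\{0\}$. I would first refine this intersection: using the decomposition $K_{B\theta}=K_\theta\oplus\theta K_B=K_B\oplus BK_\theta$ from \eqref{eq:KB Ktheta}, any $g\in\theta H^2\cap K_B$ lies in $K_B\subset K_{B\theta}$ and is orthogonal to $K_\theta$, hence lies in $\theta K_B$; since the reverse inclusion is obvious,
\[
\theta H^2\cap K_B=K_B\cap\theta K_B .
\]
In what follows write $k=\dim K_B$ (the degree of $B$) and $n=\dim K_\theta$, where $n$ may be infinite.

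For the implication ``\eqref{eq:assumption dim2} $\Rightarrow$ \eqref{eq:assumption dim}'' I would argue contrapositively. If $n<k$, then $n$ is finite, $\theta$ is a finite Blaschke product, and $\dim K_{B\theta}=k+n<\infty$; since $K_B$ and $\theta K_B$ are $k$-dimensional subspaces of $K_{B\theta}$,
\[
\dim(K_B\cap\theta K_B)\ge \dim K_B+\dim\theta K_B-\dim K_{B\theta}=2k-(k+n)=k-n>0 ,
\]
so $\theta H^2\cap K_B\neq\{0\}$ and \eqref{eq:assumption dim2} fails. (One can instead display the element: writing $\theta=p_\theta/q_\theta$ with $p_\theta,q_\theta$ polynomials of degree $\le n\le k-1$ and $q_B(z)=\prod_{i=1}^k(1-\overline{w_i}z)$, the function $g=p_\theta/q_B$ is a nonzero element of $K_B$ by \eqref{eq:elements in K_B}, and $g=\theta\cdot(q_\theta/q_B)\in\theta H^\infty\subset\theta H^2$.)

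For the converse ``\eqref{eq:assumption dim} $\Rightarrow$ \eqref{eq:assumption dim2}'' — which in particular covers every inner $\theta$ that is not a finite Blaschke product, since then $n=\infty\ge k$ — I would show: if $\theta H^2\cap K_B\neq\{0\}$, then $\theta$ is a finite Blaschke product with $\deg\theta\le k-1$. Choosing $0\neq f\in H^2$ with $\theta f\in K_B$, \eqref{eq:elements in K_B} gives $\theta f=p/q_B$ with $p$ a nonzero polynomial, $\deg p\le k-1$. As $q_B$ has all its zeros outside the closed disc, $1/q_B\in H^\infty$, so the inner part of $p/q_B$ is the finite Blaschke product $B_p$ formed by the zeros of $p$ in $\bbD$, and $\deg B_p\le\deg p\le k-1$. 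By uniqueness of the inner--outer factorization applied to $\theta f=p/q_B$, the inner factor of the left-hand side is $B_p$ and is divisible by $\theta$; hence $\theta$ is a finite Blaschke product with $\dim K_\theta=\deg\theta\le\deg B_p\le k-1<k$. Contrapositively, \eqref{eq:assumption dim} forces $\theta H^2\cap K_B=\{0\}$.

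Combining the two implications yields the equivalence. The only slightly delicate part is the factorization bookkeeping in the last paragraph — that $q_B^{-1}\in H^\infty$, that a rational inner function is automatically a finite Blaschke product, and that the inner part of a polynomial has degree at most the degree of the polynomial; none of this is hard, but it is precisely where the special structure of $K_B$ for a finite Blaschke product $B$ is used.
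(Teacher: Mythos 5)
Your proof is correct and follows essentially the same route as the paper: one direction is the same finite-dimensional/codimension count producing a nonzero element of $\theta H^2\cap K_B$ when $\dim K_\theta<\dim K_B$, and the other direction forces $\theta$ to be a rational inner function of degree at most $\deg B-1$ via the explicit description \eqref{eq:elements in K_B} of $K_B$. Your inner--outer factorization of $p/q_B$ is just a more carefully spelled-out version of the paper's observation that $\theta=(\theta f)/f$ would be a quotient of polynomials of degree at most $\deg B-1$, and the identity $\theta H^2\cap K_B=K_B\cap\theta K_B$ is a pleasant but inessential refinement.
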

 
 \begin{proof}
 	Indeed, first assume that \eqref{eq:assumption dim} is satisfied, and let $f\in \ker T_{\theta \overline B}$. Then  $\theta f\in\ker T_{\overline B}=K_B$, whence $f=T_{\overline \theta}(\theta f)\in T_{\overline \theta}K_B\subset K_B$. If $f\not\equiv 0$, then $\theta=\frac{\theta f}{f}$ is a quotient of two polynomials of degree at most $\mathrm{deg} B-1$, which contradicts assumption \eqref{eq:assumption dim}.
 	
 	 Suppose now that $ \dim K_B>\dim K_\theta $. Then $ \theta H^2 $ has finite codimension in $ H^2 $ strictly smaller than $ \dim K_B $, whence $ \theta H^2\cap K_B\not=\{0\} $. Applying~\eqref{eq:small observation} in case $ v=\theta $, it follows that $ \ker T_{\theta \overline B}\not= \{0\}$, contradicting~\eqref{eq:assumption dim2}. 
 \end{proof}
 
Condition~\eqref{eq:assumption dim} is precisely the one considered in~\cite{Gu} and~\cite{LYL}. To discuss this case, we need one more elementary lemma.
	
	\begin{lemma}\label{le:poly blaschke}
		Suppose $ h_1, h_2 $ are two polynomials of degree at most $ k-1 $  and 
		\begin{equation}\label{eq:|h_1|=|h_2|}
		|h_1|=|h_2| \text{ a.e. on } \bbT.
		\end{equation}
Then, 		
		\[
		\frac{h_2}{h_1}=\frac{B_2}{B_1},
		\]
		where $ B_i $ are Blaschke products with $ \deg B_1+\deg B_2\le k-1 $.
	\end{lemma}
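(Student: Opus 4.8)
The plan is to analyze the two polynomials $h_1,h_2$ of degree at most $k-1$ satisfying $|h_1|=|h_2|$ a.e. on $\bbT$ by factoring each into its roots. First I would write $h_1(z)=c_1\prod_{j}(z-a_j)$ and $h_2(z)=c_2\prod_{j}(z-b_j)$, grouping the roots $a_j$ (resp. $b_j$) according to whether they lie inside the open disc, on $\bbT$, or outside. For a root $a$ on $\bbT$, the factor $(z-a)$ has constant modulus $|z-a|$ on $\bbT$ which is not of the form $|z-b|$ for $b\notin\bbT$ in a way compatible with the global equality unless matched by an identical boundary factor; more efficiently, I would use the elementary fact that for $|z|=1$ one has $|z-a|=|1-\bar a z|$, so $|z-a|=|z|\,|1-\bar a z|\cdot|z|^{-1}$, and for $a\neq 0$, $|z-a| = |a|\,|z-1/\bar a|$ on $\bbT$. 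Thus each linear factor's modulus on $\bbT$ is determined, up to a positive constant, by the point $1/\bar a$ (the reflection of $a$ across $\bbT$), and a root at $0$ contributes the constant modulus $1$.

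The key step is then: the equality $|h_1(z)|=|h_2(z)|$ on $\bbT$, after replacing every factor by its reflected counterpart when the root lies outside $\bbD$ and discarding roots at the origin, forces the multiset of reflected roots lying in $\overline{\bbD}$ coming from $h_1$ to coincide with that coming from $h_2$ — because a finite Blaschke-type product (a ratio of finite products of $\phi_\alpha$'s times a constant) equals $1$ in modulus on $\bbT$ and equals $1$ identically only if its zeros and poles cancel. Concretely I would form the rational function $R = h_2/h_1$; by the preceding normalization $|R|=1$ a.e.\ on $\bbT$, so $R$ is a ratio of two finite Blaschke products $B_2/B_1$ (after cancelling common factors), with the zeros of $B_2$ being the reflections into $\bbD$ of the roots of $h_2$ that were outside, together with the roots of $h_2$ that were already in $\bbD$ but are not roots of $h_1$, and symmetrically for $B_1$. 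Since each root of $h_i$ (other than at $0$) contributes exactly one zero to $B_i$ or one zero to $B_{3-i}$ but never both, we get $\deg B_1+\deg B_2\le$ (number of nonzero roots of $h_1$) $\le k-1$, using that $h_1$ has degree at most $k-1$ hence at most $k-1$ roots counted with multiplicity.

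I expect the main obstacle to be the bookkeeping around roots lying on $\bbT$ itself and roots at the origin, and making the cancellation argument airtight. A root of $h_1$ on $\bbT$ contributes the factor $|z-\alpha|$ which is not of modulus $1$, so it must be matched by an equal boundary factor of $h_2$ at the same point with the same multiplicity (otherwise $|h_1/h_2|$ would have a genuine zero or pole on $\bbT$, impossible for a ratio of polynomials with $|h_1|=|h_2|$ a.e.\ unless the factors coincide); such common boundary factors cancel in $h_2/h_1$ and do not affect either $B_1$ or $B_2$. Roots at the origin contribute only a power of $z$, which is itself a Blaschke factor $\phi_0$, and these get absorbed into $B_1$ or $B_2$; since $h_i$ has at most $k-1$ roots total, the bound $\deg B_1 + \deg B_2 \le k-1$ survives. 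Once these cases are disposed of, the core claim is the standard uniqueness of the inner--outer type factorization for rational functions unimodular on $\bbT$, which I would invoke directly: a rational function of modulus $1$ a.e.\ on $\bbT$ is, up to a unimodular constant that here must be absorbed so that the identity $h_2/h_1 = B_2/B_1$ holds exactly, a ratio of finite Blaschke products whose degrees are controlled by the number of zeros and poles in $\overline{\bbD}$.
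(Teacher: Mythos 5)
Your proposal follows essentially the same route as the paper's proof: cancel common roots, dispose of roots on $\bbT$ and at the origin separately, use the reflection identity $|z-a|=|a|\,|z-1/\bar a|$ on $\bbT$ (equivalently, the inner--outer factorization of a polynomial) to match the remaining roots of $h_1$ and $h_2$ in reflected pairs, and then count one Blaschke factor per nonzero root of $h_1$. The substance is correct, and your final counting principle --- each nonzero root contributes exactly one zero to $B_1$ or to $B_2$, never both --- is precisely how the paper arrives at $\deg B_1+\deg B_2=p+|Z_i(g_1)|+|Z_o(g_1)|=\deg h_1\le k-1$.

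One bookkeeping slip should be fixed before this is written up. You describe the zeros of $B_2$ as ``the reflections into $\bbD$ of the roots of $h_2$ that were outside, together with the roots of $h_2$ that were already in $\bbD$.'' The reflection into $\bbD$ of an outside root of $h_2$ is a \emph{pole} of $R=h_2/h_1$ inside $\bbD$ (by the symmetry $R(z)\overline{R(1/\bar z)}=1$ it coincides with an inside root of $h_1$), hence a zero of $B_1$, not of $B_2$; the zeros of $B_2$ are only the inside roots of $h_2$, which are exactly the reflections of the outside roots of $h_1$. Taken literally, your description double-counts and yields only $\deg B_1+\deg B_2\le \deg h_1+\deg h_2\le 2(k-1)$, which is not enough. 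Your subsequent ``one zero to $B_i$ or to $B_{3-i}$ but never both'' sentence is the correct statement and silently overrides the earlier one, so the argument goes through once the assignment of zeros is stated consistently. A second, minor point: as in the paper, the identity $h_2/h_1=B_2/B_1$ holds only up to a unimodular constant, which you rightly note must be absorbed into one of the $B_i$.
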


	\begin{proof}
		First, a general remark. Suppose that $h$ is a polynomial and write $ h(z)=z^p g(z) $, with $p\in\mathbb N\cup\{0\} $ and  $g(0)\not=0 $. Denote the roots (counting with multiplicities) of $ g$ by  $ \alpha_1, \dots, \alpha_\ell $. Then, $ h^o $, the outer part of $h$, is a polynomial of degree $ \deg g$, which  
		has roots $ Z_o(h)\cup Z_i(h)$, where $Z_o(h) := \{\alpha_i: |\alpha_i|\ge 1\}$ and $ Z_i(h):=
		\{ 1/\bar\alpha_i :0<|\alpha_i|< 1 \} $.

We may assume that $ h_1, h_2 $ have no common roots (otherwise we cancel them). It also follows then that $ h_1 $ and $ h_2 $ have no roots on $ \bbT $ (since this would be a common root by~\eqref{eq:|h_1|=|h_2|}  ). Also, only one of them may have the root 0; suppose it is $ h_1 $, and write, as above, $ h_1(z)=z^p g_1(z) $, with $ g_1(0)\not=0 $.

		 Assumption~\eqref{eq:|h_1|=|h_2|} implies that the outer parts of $ g_1 $ and $ h_2 $ coincide. 
			Since $ g_1 $ and $ h_2 $ have no common roots, but $ g_1{}^o=h_2^o $, we must have $ Z_o(g_1)=Z_i(h_2) $ and $ Z_i(g_1)=Z_o(h_2) $. Then we can write $ h_2/h_1=B_2/B_1 $, with
		\[
			B_1=z^p\prod_{\alpha_i\in Z^\sharp_i(h_1)} \phi_{\alpha_i}, \quad
		B_2=\prod_{\alpha_i\in Z^\sharp_i(h_2)} \phi_{\alpha_i},
		\]
where $Z^\sharp_i(p)=\{\alpha_i:p(\alpha_i)=0, 0<|\alpha_i|<1\}=\{1/\bar\alpha_i:\alpha_i\in Z_i(p)\}$. 
	Since we have 
	\[
	\deg B_1+\deg B_2=p+|Z_i(g_1)|+|Z_i(h_2)|=
p+	|Z_i(g_1)|+|Z_o(g_1)|\le k-1,
	\]
	the lemma is proved.
	\end{proof}

The next theorem generalizes~\cite[Theorem 1.4]{LYL}.
	
	\begin{theorem}\label{th:main one-dimensional} Suppose $ B $ is a finite Blaschke product, while $ \theta $ is an inner function with $ \deg \theta\ge \deg B $. Then
		the operator $ A^\theta_B $ has a reducing subspace such that the restriction has one-dimensional defects if and only if 
		\begin{equation}\label{eq2:main one-dim}
		\theta=\frac{B_2}{B_1}(u\circ B),
		\end{equation}
		where $ u $ is a non constant inner function, while $ B_1, B_2 $ are finite Blaschke products with $ \deg B_1+\deg B_2\le \deg B-1 $.		
	\end{theorem}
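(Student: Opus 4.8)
The plan is to obtain both implications from Corollary~\ref{cor-reducing-general-case}, reading off the finite Blaschke structure through the explicit description~\eqref{eq:elements in K_B} of $K_B$ and the factorization Lemma~\ref{le:poly blaschke}. Throughout, put $k=\deg B$. First note that the hypothesis $\deg\theta\ge\deg B$ says $\dim K_\theta\ge\dim K_B$, which by Lemma~\ref{le:condition for blaschke} is precisely the standing assumption~\eqref{eq:assumption dim2}; hence $A^\theta_B$ falls under the framework of the previous section and Corollary~\ref{cor-reducing-general-case} is available.

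For the ``only if'' part, suppose $A^\theta_B$ has a reducing subspace whose restriction has one-dimensional defects. By the equivalence (i)$\Leftrightarrow$(ii) of Corollary~\ref{cor-reducing-general-case}, together with the remark following it (which forces $u$ to be non constant), there are an inner function $u$ and nonzero $h_1,h_2\in K_B$ with $\theta=\frac{h_2}{h_1}(u\circ B)$. Since $B$ is a finite Blaschke product, $u\circ B$ is again inner, so taking moduli of boundary values gives $|h_1|=|h_2|$ a.e.\ on $\bbT$. Writing $h_1=p_1/d$, $h_2=p_2/d$ with $d$ the common denominator and $\deg p_1,\deg p_2\le k-1$ as in~\eqref{eq:elements in K_B}, and using that $d$ is bounded away from $0$ on $\bbT$, we deduce $|p_1|=|p_2|$ a.e.\ on $\bbT$. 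Lemma~\ref{le:poly blaschke} now produces finite Blaschke products $B_1,B_2$ with $\deg B_1+\deg B_2\le k-1$ and $p_2/p_1=B_2/B_1$, whence $h_2/h_1=B_2/B_1$ and therefore $\theta=\frac{B_2}{B_1}(u\circ B)$, which is~\eqref{eq2:main one-dim}.

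For the ``if'' part, assume~\eqref{eq2:main one-dim}. I would produce $h_1,h_2\in K_B$ realizing condition~(ii) of Corollary~\ref{cor-reducing-general-case}. Write $B_i=N_i/\widetilde N_i$ in lowest terms, where $N_i$ is the numerator polynomial (of degree $\deg B_i$, with all roots in $\bbD$) and $\widetilde N_i$ the denominator polynomial (of degree at most $\deg B_i$), and set $h_1=N_1\widetilde N_2/d$ and $h_2=N_2\widetilde N_1/d$, with $d$ the denominator of~\eqref{eq:elements in K_B}. Both numerators have degree at most $\deg B_1+\deg B_2\le k-1$, so $h_1,h_2\in K_B$ by~\eqref{eq:elements in K_B}; they are nonzero; and $h_2/h_1=(N_2/\widetilde N_2)(\widetilde N_1/N_1)=B_2/B_1$. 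Hence $\theta=\frac{h_2}{h_1}(u\circ B)$, and the implication (ii)$\Rightarrow$(i) of Corollary~\ref{cor-reducing-general-case} yields the desired reducing subspace.

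Once Corollary~\ref{cor-reducing-general-case} and Lemma~\ref{le:poly blaschke} are in place the argument is essentially bookkeeping; the points that need a little attention are the degree counts (keeping the relevant numerators of degree $\le k-1$, which is exactly where the hypothesis $\deg B_1+\deg B_2\le\deg B-1$ enters), the fact that $u\circ B$ is inner so that $\theta=\frac{h_2}{h_1}(u\circ B)$ really does force $|h_1|=|h_2|$ on $\bbT$, and the cancellation of the common denominator $d$ when comparing moduli on $\bbT$.
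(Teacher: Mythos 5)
Your proposal is correct and follows essentially the same route as the paper: both directions reduce to Corollary~\ref{cor-reducing-general-case}(ii), clear denominators via~\eqref{eq:elements in K_B} to get polynomial numerators of degree at most $\deg B-1$, and invoke Lemma~\ref{le:poly blaschke} for the forward direction; your converse just makes explicit the choice of $h_1,h_2$ that the paper treats as immediate. The extra details you supply (the link between $\deg\theta\ge\deg B$ and~\eqref{eq:assumption dim2} via Lemma~\ref{le:condition for blaschke}, and the non-constancy of $u$ via the remark after Corollary~\ref{cor-reducing-general-case}) are consistent with the paper's intent.
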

	
	\begin{proof}
We apply to this case Corollary~\ref{cor-reducing-general-case} (ii).  The existence of the required reducing subspace is then equivalent to the existence of $ h_1, h_2\in K_B $ and an inner function $ u $, such that 		
\begin{equation}\label{eq2:theta h_1=h_2 u}
			\theta h_1=h_2(u\circ B).
\end{equation}
		By~\eqref{eq:elements in K_B}, it is equivalent to assume in this equality that $ h_i $ are polynomials of degree $ \le k-1 $, where $k=\deg B$. 
		Taking absolute values, we obtain, since $ \theta $ and $ u\circ B $ are inner, that $ |h_1|=|h_2| $ on $ \bbT $.
		We may then apply Lemma~\ref{le:poly blaschke} to obtain the desired formula~\eqref{eq2:main one-dim}.

	 The converse is immediate, since~\eqref{eq2:main one-dim} implies~\eqref{eq2:theta h_1=h_2 u}, with the degrees of $ h_1 $ and $ h_2 $ at most $ k-1 $. If we further write $ g_i(z)=\frac{h_i(z)}{\prod_{i=1}^k (1-\bar{w_i}z)}  $, we obtain
 \[
 \theta g_1= g_2(u\circ B).
 \]
 Since $ g_i\in K_B $, this
  is equivalent, by Corollary~\ref{cor-reducing-general-case}, to the existence of the required reducing subspace.
	\end{proof}

The condition becomes simpler if $ \theta $ is singular.

\begin{theorem}	
Let $\theta$ be a singular inner function and let $B$ be a finite Blaschke product. Then the operator $ A^\theta_B $ has a reducing subspace such that the restriction has one-dimensional defects if and only if 
		\begin{equation}\label{eq3:main one-dim}
		\theta=S\circ B,
		\end{equation}
for some singular inner function $S$.
\end{theorem}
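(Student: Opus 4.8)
The plan is to read the result off Theorem~\ref{th:main one-dimensional}. A singular inner function $\theta$ is not a finite Blaschke product, so $\deg\theta=\infty\ge\deg B$ and Theorem~\ref{th:main one-dimensional} applies: the operator $A^\theta_B$ has a reducing subspace whose restriction has one-dimensional defects if and only if
\[
\theta=\frac{B_2}{B_1}\,(u\circ B),
\]
with $u$ a nonconstant inner function and $B_1,B_2$ finite Blaschke products satisfying $\deg B_1+\deg B_2\le\deg B-1$. Everything then reduces to showing that, when $\theta$ is singular, this factorization is equivalent to $\theta=S\circ B$ for a (necessarily nonconstant) singular inner function $S$.

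The implication from $\theta=S\circ B$ to the factorization above is immediate: take $B_1=B_2\equiv 1$, so that $\deg B_1+\deg B_2=0\le\deg B-1$ because $B$ is nonconstant, and put $u=S$, which is nonconstant because $\theta$ is.

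For the reverse implication I would rewrite the factorization as $\theta B_1=B_2\,(u\circ B)$ and analyse the canonical Blaschke--singular decomposition of both sides. Write $u=b_u s_u$ with $b_u$ the Blaschke part and $s_u$ the singular part, so that $u\circ B=(b_u\circ B)(s_u\circ B)$. Here $s_u\circ B$ is zero-free, hence singular inner. On the other hand, if $b_u$ had a zero $a\in\bbD$, then $b_u\circ B$, and hence $u\circ B$, would vanish at each of the $\deg B$ points (counted with multiplicity) where $B$ takes the value $a$; thus the Blaschke part of $u\circ B$ would have degree at least $\deg B$. But $\theta B_1=B_2\,(u\circ B)$ forces $u\circ B$ to divide $\theta B_1$, whose Blaschke part is $B_1$; hence the Blaschke part of $u\circ B$ divides $B_1$, which has degree at most $\deg B-1$---a contradiction. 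Therefore $b_u$ is constant, $u$ is singular inner, and, writing $S=u$ and comparing singular parts in $\theta B_1=B_2\,(S\circ B)$ (on the left, the singular $\theta$ times the Blaschke product $B_1$; on the right, the Blaschke product $B_2$ times the singular function $S\circ B$), the uniqueness of the canonical factorisation gives $\theta=S\circ B$. Since $\theta$ is nonconstant, so is $S$.

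The one substantive point is the degree estimate in the last paragraph: a single zero of $u$ in $\bbD$ pulls back under $B$ to $\deg B$ zeros of $u\circ B$, and this, combined with the bound $\deg B_1+\deg B_2\le\deg B-1$ coming from Theorem~\ref{th:main one-dimensional}, is precisely what kills the Blaschke part of $u$. The rest is routine bookkeeping with the canonical factorisation of inner functions.
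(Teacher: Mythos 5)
Your proposal is correct and follows essentially the same route as the paper: both reduce to Theorem~\ref{th:main one-dimensional} and then kill the Blaschke part of $u$ by observing that a single zero of $u$ pulls back under $B$ to $\deg B$ zeros of $u\circ B$, which is incompatible with the bound $\deg B_1+\deg B_2\le \deg B-1$; the paper phrases this as $\deg(B_3\circ B)=\deg B_3\deg B>\deg B_1$ after first cancelling $B_2$, while you compare canonical factorisations of $\theta B_1=B_2(u\circ B)$ directly, which is only a cosmetic difference.
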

\begin{proof}
According to Theorem~\ref{th:main one-dimensional}, it is sufficient to prove that \eqref{eq3:main one-dim} and \eqref{eq2:main one-dim} are equivalent. The implication $\eqref{eq3:main one-dim}\Longrightarrow \eqref{eq2:main one-dim}$ is clear. Assume now that \eqref{eq2:main one-dim} is satisfied, that is we can write 
\[
B_1\theta=B_2(u\circ B),
\]
 where $B_1$ and $B_2$ are finite Blaschke products with $ \deg B_1+\deg B_2\le N-1$ and $N=\deg B$. 
 
 Since $ \theta $ is singular, $ B_2 $ must be a factor of $ B_1 $ and may be canceled. So we may assume $ B_2=1 $, or $B_1\theta=u\circ B  $, where $ \deg B_1\le N-1 $.
 
 Write then $u=B_3 S$, where $B_3$ is a Blaschke product and $S$ is the singular part of $u$. Thus we have
\[
B_1\theta =(B_3\circ B) (S\circ B).
\]
We have $ \deg (B_3\circ B)=\deg B_3 \deg B $; so, if $ B_3 $ is not constant, then
\[
\deg(B_3\circ B)\ge \deg B=N>\deg B_1.
\]
The contradiction obtained implies that $ B_3 $ is constant, and so
\[
B_1\theta =  S\circ B.
\]
Since the right hand side is singular, it follows that $ B_1 $ is constant, which proves the theorem.
\end{proof}

	\section{The case $ B(z)=z^N $}\label{se:monomials}
	
	The case $ B(z)=z^N $ is investigated at length in~\cite{Gu}. In particular, the characteristic function of $ A^\theta_{z^N} $ is computed; let us show how Gu's result follows from Theorem~\ref{th:identification of Phi} above. 

	We use the canonical basis of $ K_B $   formed by $ 1, z, \dots z^{N-1} $. To obtain the matrix of $ A^B_{\frac{\theta}{1-\lambda\overline{ B}}} $, consider first  $ A^B_{\frac{z^n}{1-\lambda\overline{ B}}} $. We have
		\[
		\frac{z^n}{1-\lambda\overline{ B}}=\sum_{j=0}^{\infty}\lambda^j z^{n-jN}
		=\sum_{j=0}^{\infty}\lambda^j z^{N(n'-j)+m}  ,
		\]
		where $ n=Nn'+m $, with $ 0\le m\le N-1 $. Since $ A^B_{z^p} $ is nonzero only for $ -(N-1)\le p \le N-1 $, we have to consider in the above sum only two terms, corresponding to $ j=n' $ and $ j=n'+1 $. Thus
		\begin{equation*}
		A^B_{\frac{z^n}{1-\lambda\overline{ B}}} = A^B_{\lambda^{n'}z^m +\lambda^{n'+1}z^{m-N} }.
		\end{equation*}
		Its matrix with respect to the canonical basis is
		\begin{equation}\label{eq:A^B_ for monomials}
		A^B_{\lambda^{n'}z^m +\lambda^{n'+1}z^{m-N} }=
		\begin{pmatrix}
		\ddots &  \ddots &\lambda^{n'+1}&\ddots &\ddots  \\
		\ddots & \ddots &   \ddots &\lambda^{n'+1}&\ddots\\
		\lambda^{n'}&\ddots&\ddots& \ddots&\ddots\\
		\ddots &\lambda^{n'}& \ddots& \ddots&\ddots\\
		\ddots&\ddots&\ddots&\ddots&\ddots 
		\end{pmatrix},
		\end{equation}	
		with two nonzero constant diagonals (one in case $ m=0 $), corresponding to entries $ a_{ij} $ with $ i-j=m $ or $ i-j=m-N $.
		
		Therefore, if
		we decompose
		\[
		\theta(z)=\theta_0(z^N)+z\theta_1(z^N)+\dots+z^{N-1}\theta_{N-1}(z^N), 
		\]
		then 
		\begin{equation}\label{eq:char function in example}
		A^B_{\frac{\theta}{1-\lambda\overline{ B}}}=
		\begin{pmatrix}
		\theta_0(\lambda) & \lambda \theta_{N-1}(\lambda) &\lambda \theta_{N-2}(\lambda)&\dots & \lambda \theta_1(\lambda) \\
		\theta_1(\lambda)& \theta_0(\lambda)&  \lambda \theta_{N-1}(\lambda)&\dots&\lambda\theta_2(\lambda)\\
		\theta_2(\lambda)& \theta_1(\lambda)&\theta_0(\lambda)&\dots& \lambda\theta_3(\lambda)\\
		\ddots&\ddots&\ddots&\ddots&\ddots\\
		\theta_{N-1}(\lambda)&\theta_{N-2}(\lambda)& \theta_{N-3}(\lambda)&\dots&\theta_0(\lambda)
		\end{pmatrix}.
		\end{equation}
	 This is precisely the form given by~\cite[Theorem 2.4]{Gu}.
		
	 In the sequel we will solve a conjecture about $ A^\theta_{z^N} $ left open in~\cite{Gu}. This appears as Conjecture~3.5 therein, and has the following statement.

 \begin{conjecture} \label{conj A} Suppose $ B(z)=z^N $. Then the following are equivalent:
 	\begin{itemize}
 		\item[(i)]  	$ A_B^\theta $ has a reducing subspace such that the restriction has one-dimensional defects.
 		
 		\item[(ii)] $ \theta(z)=b(z)u(z^N) $ for some inner function $ u $, while either $ b\equiv 1 $  or  
 			\begin{equation}\label{eq:form of b 2}
 			b(z)=\prod_{i=1}^{l}\psi_{\alpha_i, J_i},
 		\end{equation}
 		where $ l\le N-1 $, $ J_i\subset \{0, \dots, N-1\} $, and $ \psi_{\alpha, J} $ is defined by
 		\begin{equation}\label{eq:psi}
 			\psi_{\alpha, J}(z)=\prod_{i\in J} \phi_{\omega^i\alpha}(z).
 		\end{equation}
 	\end{itemize}

 \end{conjecture}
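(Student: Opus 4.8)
The plan is to decide Conjecture~\ref{conj A} by comparing it with the already-established Theorem~\ref{th:main one-dimensional}, specialized to $B(z)=z^N$. By that theorem, assertion (i) is equivalent to a factorization
\[
\theta=\frac{B_2}{B_1}(u\circ z^N)=\frac{B_2}{B_1}\,u(z^N),
\]
with $u$ non constant inner and $B_1,B_2$ finite Blaschke products satisfying $\deg B_1+\deg B_2\le N-1$. So the whole question reduces to a purely function-theoretic one: for which rational inner functions $b=B_2/B_1$ (with the degree bound) does the product $b(z)\,u(z^N)$ happen to be inner, and do these coincide with the functions $b$ described in (ii) of the conjecture? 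First I would observe that $b(z)u(z^N)$ is automatically inner whenever $b$ is inner, so the conjecture is really asserting that an inner $b$ of the form $B_2/B_1$ with $\deg B_1+\deg B_2\le N-1$ must be, up to a unimodular constant, either $1$ or a product $\prod_{i=1}^l\psi_{\alpha_i,J_i}$ as in~\eqref{eq:form of b 2}--\eqref{eq:psi}. Conversely I would check that every such $\psi$-product does arise this way, i.e.\ writes as $B_2/B_1$ with the degree bound.

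For the easy direction (ii)$\Rightarrow$(i): given $\theta=b\,u(z^N)$ with $b$ as in~\eqref{eq:form of b 2}, I would verify that each factor $\psi_{\alpha,J}(z)=\prod_{i\in J}\phi_{\omega^i\alpha}(z)$ can be written as a quotient $B_2/B_1$ of Blaschke products with $\deg B_1+\deg B_2\le N-1$ — here one should exploit the $\omega$-symmetry: the crucial identity is that $\prod_{i=0}^{N-1}\phi_{\omega^i\alpha}(z)$ equals (up to a constant) $\phi_{\alpha^N}(z^N)$, which has ``degree in $z^N$'' equal to one, so that a partial product over $i\in J$ can be balanced against the complementary partial product to land inside the allowed degree window. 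Multiplying the identities for all the factors and absorbing the resulting $\phi_{\cdot}(z^N)$ terms into $u(z^N)$ gives $\theta$ in the form required by Theorem~\ref{th:main one-dimensional}, hence (i). The case $b\equiv1$ is immediate. This direction is essentially bookkeeping with the cyclotomic-type identity and the degree bound, and I do not expect it to be the obstacle.

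The hard direction is (i)$\Rightarrow$(ii): starting from $\theta=(B_2/B_1)u(z^N)$ with $\deg B_1+\deg B_2\le N-1$, I must show the rational inner function $b:=B_2/B_1$ has the stated $\omega$-symmetric product structure. The key point is that $\theta$ is \emph{inner}, so $b=\theta/u(z^N)$, and dividing $\theta$ by $u(z^N)$ can only remove from $\theta$ zeros that come in complete $\omega$-orbits of a single $z^N$-zero of $u$; but $b$ has very few zeros/poles ($\le N-1$ of each). The plan is: write $u=cB_3S$ with $B_3$ Blaschke, $S$ singular, $c$ constant; the singular part forces $S\circ z^N$ to divide $\theta$ and plays no role in $b$, and $B_3\circ z^N=\prod(\text{orbits})$ — each nontrivial factor $\phi_\beta(z^N)$ pulls back to a full orbit $\prod_{i=0}^{N-1}\phi_{\omega^i\gamma}$ of degree exactly $N$, which cannot be cancelled off by the degree-$(N-1)$ denominator $B_1$ unless it is genuinely present as a zero-orbit of $\theta$; the residual mismatch between $\theta$'s zeros and these full orbits is exactly the function $b$, and since each orbit contributes $N$ zeros while $b$ has $<N$, what survives in $b$ from each ``touched'' orbit is a proper sub-product $\psi_{\alpha,J}$ over a proper subset $J\subsetneq\{0,\dots,N-1\}$ (with a matching sub-product going into $B_1$). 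Counting: at most $N-1$ zeros total in $B_2$ and $N-1$ poles in $B_1$ means at most $l\le N-1$ distinct orbits can be involved, giving precisely~\eqref{eq:form of b 2}. I expect the delicate bookkeeping to be in reconciling the poles $B_1$ with the zeros of $u(z^N)$: one has to argue that the poles of $b$ are forced to sit at points $\omega^i\alpha$ from the \emph{same} orbits as the zeros, i.e.\ that $B_1$'s zeros are among the $\{1/\bar\gamma_j\}$'s appearing when one clears the $\phi_\beta(z^N)$ factors; this is where the constraint ``$\theta$ inner'' has to be squeezed hardest, and it is the step most likely to reveal whether the conjecture is in fact \emph{false} — which, per the abstract, is the announced outcome, so I would be especially alert to a counterexample $\theta=(B_2/B_1)u(z^N)$ where the zeros/poles of $B_2/B_1$ do \emph{not} organize into $\omega$-orbit sub-products, the simplest candidates being small $N$ (say $N=2$ or $N=3$) with $B_1,B_2$ degree-one Blaschke factors at generic points.
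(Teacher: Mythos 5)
Your reduction to Theorem~\ref{th:main one-dimensional} is the right starting point, and your observation that everything comes down to deciding which inner functions $b=B_2/B_1$ with $\deg B_1+\deg B_2\le N-1$ can occur is exactly how the paper proceeds. But you have located the difficulty in the wrong direction. The implication (i)$\Rightarrow$(ii) --- the one you flag as delicate and as the likely source of a counterexample --- is in fact true (it is \cite[Theorem 3.4]{Gu}, and the paper re-derives it inside Theorem~\ref{th:connection with Gu}): clearing the poles $B_1$ against full $\omega$-orbits of $u(z^N)$ always leaves a product of $\psi_{\alpha,J}$'s. The direction that fails is the one you dismiss as ``essentially bookkeeping'', namely (ii)$\Rightarrow$(i), and it fails precisely in the bookkeeping of degrees. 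Writing a single factor $\psi_{\alpha,J}$ in the admissible form costs either $|J|$ (put $\psi_{\alpha,J}$ into $B_2$) or $N-|J|$ (put $\psi_{\alpha,\{0,\dots,N-1\}\setminus J}$ into $B_1$ and absorb the full orbit $\phi_{\alpha^N}(z^N)$ into $u$); these costs \emph{add} over the $l$ factors, so the condition actually needed is $\sum_i\min\{|J_i|,N-|J_i|\}\le N-1$ (this is the corrected statement, Theorem~\ref{th:connection with Gu}), which is strictly stronger than $l\le N-1$ as soon as $N\ge 4$.

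Consequently your proposed hunting ground would come up empty: for $N=2$ and $N=3$ every proper nonempty $J_i$ gives $\min\{|J_i|,N-|J_i|\}=1$, the corrected condition collapses to $l\le N-1$, and the conjecture is true there (Gu already proved $N=3$); and a counterexample of the type you describe for (i)$\Rightarrow$(ii) does not exist. The paper's counterexample lives at $N=4$: $\theta=\psi_{\alpha,\{0,2\}}\psi_{\beta,\{0,2\}}=\frac{(z^2-\alpha^2)(z^2-\beta^2)}{(1-\bar\alpha^2z^2)(1-\bar\beta^2z^2)}$ satisfies (ii) with $l=2\le 3$ and $u\equiv1$, but each half-orbit costs $2$, so $\sum\min=4>3$; a degree count on $B_2\theta=B_1u(z^4)$ forces $\deg u=1$, whence $u(z^4)$ has either a quadruple zero at $0$ or a full four-point orbit, neither compatible with the zero set $\{\pm\alpha,\pm\beta\}$, so (i) fails. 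Your proposal neither isolates this degree obstruction nor produces a counterexample, so the key idea is missing.
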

 
	 \cite[Theorem 3.4]{Gu} shows that (i)$ \implies $(ii), while (ii)$ \implies $(i) is proved only for $ N=3 $ in \cite[Section~5]{Gu}.

%
%

\begin{theorem}\label{th:conjecture A is false}
Conjecture~\ref{conj A}   is false for $ N=4 $.	
\end{theorem}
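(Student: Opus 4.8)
The plan is to exhibit an inner function $\theta$ for $N=4$ such that $A^\theta_{z^4}$ has a reducing subspace with one-dimensional defects, yet $\theta$ does \emph{not} have the form required in part (ii) of Conjecture~\ref{conj A}. By Theorem~\ref{th:main one-dimensional} (applied with $B(z)=z^N$), the existence of such a reducing subspace is equivalent to a factorization $\theta=\frac{B_2}{B_1}(u\circ B)$ with $u$ non-constant inner and $B_1,B_2$ finite Blaschke products satisfying $\deg B_1+\deg B_2\le N-1=3$. The key point is that Theorem~\ref{th:main one-dimensional} allows \emph{arbitrary} finite Blaschke products $B_1,B_2$ of total degree $\le 3$, whereas Conjecture~\ref{conj A}(ii) only allows the very rigid quotients $b=\prod_i\psi_{\alpha_i,J_i}$, which are products of factors of the special form $\prod_{i\in J}\phi_{\omega^i\alpha}$ — i.e. Blaschke products whose zeros come in full or partial ``orbits'' under multiplication by fourth roots of unity, \emph{with no poles}. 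So the strategy is to pick $B_1,B_2$ of total degree $\le 3$ whose quotient $B_2/B_1$ is genuinely not of this constrained shape.

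First I would set $u$ to be any non-constant inner function, say $u(z)=z$, so that $u\circ B = z^4$. Then I would choose, for instance, $B_1(z)=\phi_a(z)$ for a single point $a\in\mathbb D\setminus\{0\}$ and $B_2=1$, giving
\[
\theta(z)=\frac{z^4}{\phi_a(z)}=\frac{1-\bar a z}{z-a}\,z^4 .
\]
One must check this is actually inner: $z^4/\phi_a(z)$ has modulus $1$ a.e.\ on $\mathbb T$, and it is analytic in $\mathbb D$ precisely because the zero of $z^4$ at the origin does \emph{not} cancel the pole of $1/\phi_a$ at $a\ne 0$ — so in fact this naive choice fails to be analytic. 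The correct move is to take $B_2$ to absorb the pole: choose $B_1(z)=\phi_a(z)\phi_b(z)$ and $B_2(z)=\phi_c(z)$ with $a,b,c\in\mathbb D$ chosen so that $\deg B_1+\deg B_2=3\le N-1$ and so that the numerator Blaschke product $B_2\cdot(u\circ B)$ still vanishes to sufficient order at the poles $a,b$ of $1/B_1$ — but $u\circ B=z^4$ only vanishes at $0$. Hence one should instead let $u$ itself carry a Blaschke factor: write $u=\phi_\beta$, so $u\circ B=\phi_\beta(z^4)=\frac{z^4-\beta}{1-\bar\beta z^4}$, whose zeros are the four fourth-roots of $\beta$. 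Now take $B_1(z)=\phi_a(z)$ where $a$ is \emph{one} of those fourth roots (so the pole of $1/B_1$ at $a$ is cancelled by a zero of $u\circ B$), and $B_2=1$; then $\theta=\frac{\phi_\beta(z^4)}{\phi_a(z)}$ is a genuine inner function of the form $b(z)u(z^4)$ with $b(z)=1/\phi_a(z)$ — a single Blaschke factor in the denominator.

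The final step is to verify that this $\theta$ violates Conjecture~\ref{conj A}(ii). Suppose $\theta=b'(z)\,u'(z^4)$ with $u'$ inner and $b'$ either $1$ or of the form \eqref{eq:form of b 2}. Comparing zero/pole structure: our $\theta$ has a genuine pole (from $1/\phi_a$) at the single point $a$, which is one of four fourth-roots of $\beta$; in particular $a$ is not a fourth power of any single complex number in a symmetric orbit, and $b'$ as in \eqref{eq:form of b 2} is a \emph{product of Blaschke products} $\psi_{\alpha_i,J_i}$ and hence has \emph{no poles at all}. So the only way to match is $b'\equiv 1$, forcing $u'(z^4)=\frac{\phi_\beta(z^4)}{\phi_a(z)}$, i.e.\ $\phi_a(z)\,u'(z^4)=\phi_\beta(z^4)$; but the left side, viewed as a meromorphic function, has a pole at $a$ unless $u'(z^4)$ vanishes there, and then $u'(z^4)=\phi_\beta(z^4)/\phi_a(z)$ would have to be a function of $z^4$ alone — impossible, since dividing a function of $z^4$ by $\phi_a(z)$ (with $a$ a \emph{single} one of the four roots, not the full orbit) destroys the $\mathbb Z/4$-periodicity. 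This contradiction shows no such representation exists, so Conjecture~\ref{conj A} fails for $N=4$.

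\medskip

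The step I expect to be the main obstacle is making the counterexample \emph{clean}: one must produce a $\theta$ that is honestly inner (analytic in $\mathbb D$, unimodular on $\mathbb T$) and that is \emph{provably} not of the form in (ii), and the ``not of that form'' part requires a genuine structural argument — essentially identifying the invariant (presence of a pole in the Blaschke quotient $B_2/B_1$, or asymmetry of the zero set under the $\mathbb Z/N$-action) that Theorem~\ref{th:main one-dimensional} permits but Conjecture~\ref{conj A}(ii) forbids. I would double-check by a direct computation using \eqref{eq:char function in example}: decompose the candidate $\theta$ as $\theta_0(z^4)+z\theta_1(z^4)+z^2\theta_2(z^4)+z^3\theta_3(z^4)$, write down the $4\times 4$ matrix $\Phi(\lambda)$ explicitly, and confirm directly that it has a $1$-dimensional reducing pair of subspaces $E\subset\mathbb C^4$, $E_*\subset\mathbb C^4$ with $\Phi(e^{it})E=E_*$ a.e.\ — this both validates (i) for our $\theta$ and makes the refutation of (ii) self-contained.
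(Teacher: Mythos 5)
Your approach attacks the wrong implication, and your candidate function is not in fact a counterexample. The paper notes, immediately before stating the theorem, that the implication (i)$\implies$(ii) of Conjecture~\ref{conj A} is already \emph{proved} in \cite[Theorem 3.4]{Gu}; only (ii)$\implies$(i) is open (Gu established it only for $N=3$). Consequently no $\theta$ satisfying (i) but not (ii) can exist, and a refutation must instead exhibit a $\theta$ satisfying (ii) but not (i). That is what the paper does: it takes $\theta=\psi_{\alpha,J}\psi_{\beta,J}$ with $J=\{0,2\}$ and $\alpha\neq\beta$, which is manifestly of the form (ii), and then uses the degree constraint $\deg B_1+\deg B_2\le N-1=3$ from Theorem~\ref{th:main one-dimensional} together with a count of zeros to show that no factorization $B_2\theta=B_1\,u(z^4)$ is possible, so (i) fails.

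Moreover, your specific $\theta=\phi_\beta(z^4)/\phi_a(z)$ (with $a^4=\beta$) does not do what you claim. By Lemma~\ref{le:whole blaschke}, $\phi_\beta(z^4)=\prod_{j=0}^{3}\phi_{\omega^j a}(z)$, so after cancellation your function is the degree-three Blaschke product $\phi_{\omega a}\phi_{\omega^2 a}\phi_{\omega^3 a}=\psi_{a,\{1,2,3\}}$: it has no pole in $\mathbb D$ at all, and it is exactly of the form \eqref{eq:form of b 2} with $l=1$, $J_1=\{1,2,3\}$ and $u$ a unimodular constant. Hence it satisfies (ii) as well as (i), and the ``contradiction'' in your last paragraph evaporates --- the zero set $\{\omega a,\omega^2a,\omega^3a\}$ is a partial orbit of precisely the kind the factors $\psi_{\alpha,J}$ are designed to capture. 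The invariant that genuinely separates (i) from (ii) is not the presence of poles or asymmetry of the zero set under the $\mathbb Z/N$-action, but the quantitative condition \eqref{eq:condition on psi}, namely $\sum_i\min\{|J_i|,N-|J_i|\}\le N-1$; the paper's example has two sets $J_i$ each contributing $\min\{2,2\}=2$, so the sum is $4>3$, which is exactly where (ii) fails to imply (i).
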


\begin{proof}
	Take two different nonzero values $ \alpha,\beta\in \bbD $, and define
	\[
	\theta(z)= \frac{(z^2-\alpha^2)(z^2-\beta^2)}{(1-\bar\alpha^2z^2)(1-\bar\beta^2z^2)}.
	\]
	We have then 
	\[
	\theta(z)=\psi_{\alpha, J}\psi_{\beta, J}
	\]
	with $ J=\{0, 2\}\subset\{0,1,2,3\} $, so $ \theta $ satisfies condition (ii) of Conjecture~\ref{conj A}.
	
	On the other hand, if $ \theta $ would satisfy condition (i), it would follow by Theorem~\ref{th:main one-dimensional} that one should have
	\begin{equation}\label{eq:degrees}
	B_2(z)\theta(z)= B_1(z)u(z^4),
	\end{equation}
	with $ u $ inner and $ B_1, B_2 $ finite Blaschke products with $ \deg B_1+\deg B_2\le 3 $.
	
	Obviously $ u $ has also to be a finite Blaschke product. Equating the degrees in both sides yields
	\[
	\deg B_1+4=\deg B_2+4\deg u.
	\]
	First, $ \deg B_1=3 $ would imply $ \deg B_2=0 $, so $ 7=4\deg u $: a contradiction. So the degree of the left hand side of~\eqref{eq:degrees} is between 4 and 6, which implies $ \deg u=1 $. Again equating the degrees yields $ \deg B_1=\deg B_2=0\ \text{or}\ 1 $.
	
	Now $ u(z^4) $ has either the root 0 of multiplicity 4, or four distinct roots. Both possibilities are incompatible with the fact that the left hand side of~\eqref{eq:degrees} has either 2 or three roots. We have obtained the desired contradiction, so $ \theta $ does not satisfy (i) of Conjecture~\ref{conj A}.
\end{proof}

In fact, we may replace Conjecture~\ref{conj A} with a precise result.
We will need the next lemma, also proved in~\cite[Theorem 3.4]{Gu}. 

\begin{lemma}\label{le:whole blaschke}
	If $ \alpha\in\bbD $, then
	\[
	\phi_{\alpha^N} (z^N)=\prod_{i=0}^{N-1} \phi_{\omega^i\alpha}(z).
	\]
\end{lemma}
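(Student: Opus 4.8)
The plan is to verify the identity $\phi_{\alpha^N}(z^N) = \prod_{i=0}^{N-1}\phi_{\omega^i\alpha}(z)$ directly, where $\omega = e^{2\pi i/N}$ is a primitive $N$-th root of unity. First I would recall $\phi_\beta(w) = (w-\beta)/(1-\bar\beta w)$ and compute the left-hand side: $\phi_{\alpha^N}(z^N) = (z^N - \alpha^N)/(1-\bar\alpha^N z^N)$. The strategy is to factor numerator and denominator separately over the $N$-th roots of $\alpha^N$ and $\bar\alpha^N$ respectively.

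The key algebraic fact is the factorization $w^N - c^N = \prod_{i=0}^{N-1}(w - \omega^i c)$, valid for any complex $c$ (the roots of $w^N = c^N$ are exactly $\omega^i c$). Applying this with $w=z$, $c=\alpha$ gives $z^N - \alpha^N = \prod_{i=0}^{N-1}(z - \omega^i\alpha)$. For the denominator, write $1 - \bar\alpha^N z^N = -\bar\alpha^N(z^N - \bar\alpha^{-N})$ formally, but it is cleaner to note $1 - \bar\alpha^N z^N = \prod_{i=0}^{N-1}(1 - \overline{\omega^i\alpha}\,z)$: indeed, expanding the right-hand side, the product of the constant terms is $1$, the product $\prod_i \overline{\omega^i\alpha} = \bar\alpha^N \prod_i \bar\omega^i = \bar\alpha^N$ (since $\prod_{i=0}^{N-1}\omega^i = \omega^{N(N-1)/2} = (-1)^{N-1}$, and one checks the sign works out — alternatively, substitute $z \mapsto 1/\bar z$ and reduce to the numerator case). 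A clean way to see this: the zeros of $z \mapsto 1 - \bar\alpha^N z^N$ are the points $z$ with $z^N = \bar\alpha^{-N}$, i.e. $z = \omega^i/\bar\alpha = \omega^i \alpha/|\alpha|^2$; but $1 - \overline{\omega^i\alpha}\,z = 0$ exactly when $z = 1/\overline{\omega^i\alpha} = \omega^i\alpha/|\alpha|^2$ (using $\bar\omega^{-i} = \omega^i$), so both sides are degree-$N$ polynomials in $z$ with the same zeros and the same value $1$ at $z=0$, hence equal.

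Combining,
\[
\phi_{\alpha^N}(z^N) = \frac{\prod_{i=0}^{N-1}(z-\omega^i\alpha)}{\prod_{i=0}^{N-1}(1-\overline{\omega^i\alpha}\,z)} = \prod_{i=0}^{N-1}\frac{z-\omega^i\alpha}{1-\overline{\omega^i\alpha}\,z} = \prod_{i=0}^{N-1}\phi_{\omega^i\alpha}(z),
\]
which is the claim. The only mild subtlety — really the sole place where care is needed — is the bookkeeping of the constant $\bar\alpha^N$ and the root-of-unity products in the denominator factorization; framing it via "same zeros, same value at $0$, same degree" sidesteps any sign computation entirely. Everything else is a one-line substitution.
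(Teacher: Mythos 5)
Your verification is correct: the factorizations $z^N-\alpha^N=\prod_{i=0}^{N-1}(z-\omega^i\alpha)$ and $1-\bar\alpha^N z^N=\prod_{i=0}^{N-1}(1-\overline{\omega^i\alpha}\,z)$ are both right, and your ``same degree, same zeros, same value at $0$'' argument for the denominator is a clean way to avoid tracking the constant $\prod_i\bar\omega^i=(-1)^{N-1}$ (which in any case combines with $(-1)^N$ to give the correct leading coefficient $-\bar\alpha^N$). The paper itself gives no proof, deferring to \cite[Theorem 3.4]{Gu}; your direct computation is the standard argument and fills that in. The only point worth a parenthetical is the degenerate case $\alpha=0$, where the denominator product is constant and the root-counting argument does not literally apply, but the identity reduces to $z^N=z^N$ trivially.
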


\begin{theorem}\label{th:connection with Gu}
	Suppose $ B(z)=z^N $. Then the following are equivalent:
	\begin{itemize}
		\item[(i)]  	$ A_B^\theta $ has a reducing subspace such that the restriction has one-dimensional defects.
		
		\item[(ii)] $ \theta(z)=b(z)u(z^N) $ for some inner function $ u $, while $ b $ is either 1 or a finite Blaschke product given by~\eqref{eq:form of b 2},
	 where $ l\le N-1 $, $ J_i\subset \{0, \dots, N-1\} $, $ \psi_{\alpha, J} $ are defined by~\eqref{eq:psi}, and, moreover,
	 \begin{equation}\label{eq:condition on psi}
	 \sum\limits_{i=1}^{l} \min\{|J_i|, N-|J_i|\}\le N-1.
	 \end{equation}
	 
	\end{itemize}

\end{theorem}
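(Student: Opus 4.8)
The plan is to reduce the statement to Theorem~\ref{th:main one-dimensional}, which (with $B(z)=z^N$, so $\deg B = N$) says that assertion (i) is equivalent to the existence of a factorization $\theta = \frac{B_2}{B_1}(u\circ B)$ with $u$ a nonconstant inner function and $B_1,B_2$ finite Blaschke products satisfying $\deg B_1 + \deg B_2 \le N-1$. Since $B(z)=z^N$ we have $u\circ B = u(z^N)$, so the whole content of the theorem is to show that the class of ratios $b = B_2/B_1$ of finite Blaschke products with $\deg B_1 + \deg B_2 \le N-1$ that additionally produce an \emph{inner} (hence, being a ratio of finite Blaschke products with no pole, a finite Blaschke product) function $\theta = b \cdot (u\circ B)$ coincides with the class described in (ii): namely products $\prod_{i=1}^l \psi_{\alpha_i, J_i}$ subject to the constraint~\eqref{eq:condition on psi}. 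The key geometric input is Lemma~\ref{le:whole blaschke}: $\phi_{\alpha^N}(z^N) = \prod_{i=0}^{N-1}\phi_{\omega^i\alpha}(z)$, which lets one trade a full ``orbit'' factor $\psi_{\alpha,\{0,\dots,N-1\}}$ for a factor of the form $(\text{Blaschke})\circ B$.

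For the implication (ii)$\implies$(i): assume $\theta = b\, u(z^N)$ with $b = \prod_{i=1}^l \psi_{\alpha_i,J_i}$ and~\eqref{eq:condition on psi} holding. For each $i$, if $|J_i| \le N - |J_i|$ leave $\psi_{\alpha_i,J_i}$ as is; otherwise write $\psi_{\alpha_i,J_i} = \psi_{\alpha_i,\{0,\dots,N-1\}} / \psi_{\alpha_i, J_i^c} = \phi_{\alpha_i^N}(z^N) \overline{\psi_{\alpha_i,J_i^c}}^{\,-1}$ using Lemma~\ref{le:whole blaschke}, thereby moving the ``full orbit part'' into $u(z^N)$ (absorbing $\phi_{\alpha_i^N}(z^N)$ into a new $u$) at the cost of a denominator factor $\psi_{\alpha_i, J_i^c}$ of degree $N - |J_i| = \min$ in that coordinate. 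Collecting terms, one obtains $\theta = \frac{B_2}{B_1}(\tilde u \circ B)$ where $B_2$ collects the small numerator orbits ($|J_i|$ small) and $B_1$ collects the small complementary orbits ($N-|J_i|$ small), so $\deg B_1 + \deg B_2 = \sum_i \min\{|J_i|, N-|J_i|\} \le N-1$ by~\eqref{eq:condition on psi}; then Theorem~\ref{th:main one-dimensional} gives (i). One must check $\tilde u$ is nonconstant; this is automatic since $\theta$ is assumed nonconstant of degree forcing at least one $z^N$-block, or one invokes the Remark after Corollary~\ref{cor-reducing-general-case} (the hypothesis~\eqref{eq:assumption dim2}, i.e. $\deg\theta \ge N$, forces $u$ nonconstant).

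For (i)$\implies$(ii): by Theorem~\ref{th:main one-dimensional} write $B_1\theta = B_2(u\circ B)$ with $\deg B_1 + \deg B_2 \le N-1$. First one analyzes the root structure of $u(z^N)$: each root $\beta$ of $u$ of multiplicity $r$ lifts to the $N$ distinct roots $\{\zeta : \zeta^N = \beta\}$, each of multiplicity $r$, i.e. to $r$ copies of a full scaled orbit; by Lemma~\ref{le:whole blaschke} this contributes $\phi_{\gamma^N}(z^N)$-type factors for a suitable $\gamma$ with $\gamma^N = \beta$, which we absorb. The factor $B_1$ of degree $\le N-1$ in the denominator can cancel at most $N-1$ roots from $u(z^N)$; since removing even one root from a full orbit breaks the orbit, the effect is to leave, for finitely many orbits, a partial orbit $\psi_{\alpha_i, J_i^c}$ in the denominator with $\sum |J_i^c| = \deg B_1$, and dually $B_2$ contributes partial orbits $\psi_{\alpha_j,J_j}$ in the numerator with $\sum|J_j| = \deg B_2$. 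Combining the partial orbit in the denominator coming from a given scaled orbit with what remains of that orbit upstairs (a partial orbit in the numerator) and using that a full orbit factor can always be shifted into the $u(z^N)$ part, one rewrites $\theta = b\,\tilde u(z^N)$ with $b = \prod \psi_{\alpha_i,J_i}$ and, choosing in each block the smaller of the numerator-partial-orbit or denominator-partial-orbit representation, $\sum_i \min\{|J_i|, N-|J_i|\} \le \deg B_1 + \deg B_2 \le N-1$, which is~\eqref{eq:condition on psi}.

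The main obstacle is the bookkeeping in (i)$\implies$(ii): one must carefully show that an \emph{arbitrary} Blaschke ratio $B_2/B_1$ (with the degree bound) for which $\theta := (B_2/B_1)(u\circ B)$ happens to be inner \emph{must} in fact have its zeros and poles organized into (partial) scaled orbits $\{\omega^i\alpha\}$. This is where the structure $B = z^N$ is essential and Lemma~\ref{le:whole blaschke} is used repeatedly; the subtlety is that $B_1,B_2$ individually need not respect orbits, only the combination $B_2 \cdot (u\circ B) / B_1$ does, so one argues by looking at which roots of $u(z^N)$ survive after dividing by $B_1$ and reorganizing, using the fact that $u(z^N)$ supplies zeros only in complete scaled orbits and $\theta$ being inner forces $B_1 \mid B_2(u\circ B)$. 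Once the orbit structure is extracted, the passage between the $\psi_{\alpha,J}$ description and the $\min\{|J|,N-|J|\}$ degree count via Lemma~\ref{le:whole blaschke} is routine.
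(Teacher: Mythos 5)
Your proposal is correct and follows essentially the same route as the paper: both directions reduce to Theorem~\ref{th:main one-dimensional} and use Lemma~\ref{le:whole blaschke} to trade full orbits $\psi_{\alpha,\{0,\dots,N-1\}}$ between $b$ and the $u(z^N)$ factor, with the degree bound $\deg B_1+\deg B_2\le N-1$ translating into~\eqref{eq:condition on psi} via the $\min\{|J_i|,N-|J_i|\}$ count. The bookkeeping you flag as the main obstacle in (i)$\implies$(ii) is resolved in the paper exactly as you sketch: the roots of $B_1$ must be cancelled by $u(z^N)$, whose zeros occur only in complete orbits, so dividing $u(z^N)$ by $B_1$ leaves the complementary partial orbits $\psi_{\alpha_i,J_i''}$ in the numerator.
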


\begin{proof}  (i)$ \implies $(ii).
	From Theorem~\ref{th:main one-dimensional} we know that $ \theta $ is given by ~\eqref{eq2:main one-dim}, where $ B_1 $ and $ B_2 $ have no common roots. We may denote the roots of $ B_1 $ (counting multiplicities) by \[ \{\alpha_1^1, \dots, \alpha^1_{s_1}; \alpha^2_1,\dots, \alpha^2_{s_2}; \dots ;\alpha^p_1,\dots, \alpha^p_{s_p} \},
	\]
where, for each $ i=1, \dots, p $, the values $ \alpha^i_1, \dots , \alpha^i_{s_i} $ are all distinct, and
\[
(\alpha^i_1)^N=\dots=(\alpha^i_{s_i})^N.
\]
Similarly, we denote the roots of $ B_2 $ by
\[ \{\beta_1^1, \dots, \beta^1_{r_1}; \beta^2_1,\dots, \beta^2_{r_2}; \dots ;\beta^q_1,\dots, \beta^q_{r_q} \},
\]
where, for each $ i=1, \dots, q $, the values $ \beta^i_1, \dots , \beta^i_{r_i} $ are all distinct, and
\[
(\beta^i_1)^N=\dots=(\beta^i_{s_i})^N.
\]
Note that the condition $ \deg B_1+\deg B_2\le N-1 $ is transcribed as
\begin{equation}\label{eq:s_i r_i}
s_1+\dots+s_p+r_1+\dots +r_q\le N-1.
\end{equation}
In particular, $ p+q\le N-1 $.

Now, it is easy to see that, for each $ i=1, \dots, q $, the Blaschke product
\[
\phi_{\beta^i_1}\dots \phi_{\beta^i_{r_i}}
\]
is equal to $ \psi_{\beta^i_1, J_i} $ for some $ J_i\subset \{0, \dots, N-1\} $. So
\begin{equation}\label{eq:B_2 psi}
B_2=\prod_{i=1}^{q}  \psi_{\beta^i_1, J_i}.
\end{equation}

The matter is  more subtle as concerns $ B_1 $: it appears at the denominator, which we do not want. We have, similarly,
\begin{equation}\label{eq:B_1 psi 1}
B_1=\prod_{i=1}^{p}  \psi_{\alpha^i_1, J'_i}
\end{equation}
for some $ J'_i\subset \{0, \dots, N-1\} $.

The factor $ \phi_{\alpha^1_1}(z) $ must be canceled by a factor in $ u(z^N) $, so $\alpha^1_1  $ must be a root of $ u(z^N) $. But then $ u(z^N) $ must also have the roots $ \omega^j \alpha^1_1 $ for $ j=1, \dots, N-1 $, and so  
\[
u(z^N)=\prod_{j=0}^{N-1} \phi_{\omega^j\alpha^1_1}(z) u_1(z^N).
\]
Since
\[
\frac{\phi_{\omega^j\alpha^1_1}(z)}{\psi_{\alpha^1_1, J'_1}}=\psi_{\alpha^1_1, J''_1}
\]
with $ J''_1=\{0, \dots, N-1\}\setminus J'_1 $, we have
\[
\frac{u(z^N)}{\psi_{\alpha^1_1, J'_1}}=\psi_{\alpha^1_1, J''_1}u_1(z^N).
\]
We may continue the argument (or use an appropriate induction) to obtain
\begin{equation}\label{eq:B_1 psi modified}
\frac{u(z^N)}{B_1(z)}=\prod_{i=1}^{p}  \psi_{\alpha^i_1, J''_i}u'(z^N)
\end{equation}
for an inner function $ u' $, where $ J''_i=\{0, \dots, N-1\}\setminus J'_i $. From~\eqref{eq2:main one-dim},~\eqref{eq:B_2 psi}, and~\eqref{eq:B_1 psi modified} it follows that
\[
\theta(z)=\prod_{i=1}^{q}  \psi_{\beta^i_1, J_i}\prod_{i=1}^{p}  \psi_{\alpha^i_1, J''_i}\  u'(z^N).
\]
This is exactly the form given by~\eqref{eq:form of b 2}.   Moreover $ \min\{|J_i|, N-|J_i|\}\le r_i  $ and $ \min\{|J_i''|, N-|J_i''|\}\le s_i  $, so~\eqref{eq:s_i r_i} implies~\eqref{eq:condition on psi}.

(ii)$ \implies $(i). Suppose $ b(z) $ is given by~\eqref{eq:form of b 2}, with~\eqref{eq:condition on psi} satisfied. Define  
\[
B_2=\prod_{\min\{|J_i|, N-|J_i|\}=|J_i|} \psi_{\alpha_i, J_i}
\]
and
\[
B_1=\prod_{\min\{|J_i|, N-|J_i|\}=N-|J_i|} \psi_{\alpha_i, N\setminus J_i}.
\]
Then
\[
\theta(z)=\frac{B_2(z)}{B_1(z)}u_1(z^N),
\]
where
\[
u_1(z)=u(z)\prod_{\min\{|J_i|, N-|J_i|\}=N-|J_i|}\phi_{\alpha_i^N}(z);
\]
note that we have used Lemma~\ref{le:whole blaschke}.
\end{proof}

%
%
%
%
%

\end{document}